\newtheorem{theorem}{Theorem}[section]
\newtheorem{definition}[theorem]{Definition}
\newtheorem{example}[theorem]{Example}
\newtheorem{remark}[theorem]{Remark}
\theoremstyle{example}
\theoremstyle{remark}
\numberwithin{equation}{section}
\newcommand{\beq}{\begin{equation}}
\newcommand{\eeq}{\end{equation}}
\DeclareMathOperator{\ind}{index}
\newcommand{\CC}{\mathbb{C}}
\newcommand{\HH}{\mathbb{H}}
\newcommand{\QQ}{\mathbb{Q}}
\newcommand{\bS}{\mathbb{S}}
\newcommand{\bbZ}{\mathbb{Z}}
\newcommand{\cP}{\mathcal{P}}
\newcommand\CIc{{\mathcal{C}}^{\infty}_c}
\newcommand\CI{{\mathcal{C}}^{\infty}}
\newcommand{\F}{\mathcal{F}}
\DeclareMathOperator{\Hom}{Hom}
\DeclareMathOperator{\tr}{tr}
\DeclareMathOperator{\Tr}{Tr}
\newcommand{\C}{\mathbb{C}}
\newcommand{\R}{\mathbb{R}}
\newcommand{\Z}{\mathbb{Z}}
\newcommand{\Q}{\mathbb{Q}}
\newcommand{\GCh}{\textsf{GCh}}
\newcommand{\Ch}{\mathrm{Ch}}
\renewcommand{\HH}{\mathcal{H}}
\newcommand{\cA}{\mathcal A}
\newcommand{\cS}{\mathcal{S}}
\newcommand{\dirac}{\partial\!\!\!\!\!\slash}
\newcommand {\be}{\begin{equation}}
\newcommand {\ee}{\end{equation}}
\newcommand{\h}{\begin{eqnarray*}}
\newcommand{\e}{\end{eqnarray*}}
\begin{document}


\title
{Projective elliptic genera and elliptic pseudodifferential genera}

\author{Fei Han}
\address{Department of Mathematics,
National University of Singapore, Singapore 119076}
\email{mathanf@nus.edu.sg}

 \author{Varghese Mathai}
\address{School of Mathematical Sciences,
University of Adelaide, Adelaide 5005, Australia}
\email{mathai.varghese@adelaide.edu.au}

\subjclass[2010]{Primary 58J26, 58J40, Secondary 55P35, 11F55}
\keywords{Projective elliptic genera, projective elliptic pseudodifferential genera, graded twisted Chern character, modularity, Schur functors}
\date{}

\maketitle

\begin{abstract}
In this paper, we construct for the first time the projective elliptic genera for a compact oriented manifold equipped with a projective complex vector bundle. Such projective elliptic genera are rational $q$-series that have topological definition and also have analytic interpretation via the fractional index theorem in \cite{MMS06} without requiring spin condition. We prove the modularity properties of these projective elliptic genera. As an application, we construct elliptic pseudodifferential genera for any elliptic pseudodifferential operator. This suggests the existence of putative  $S^1$-equivariant  elliptic pseudodifferential operators on loop space whose equivariant indices are elliptic pseudodifferential genera.

\end{abstract}

\tableofcontents


\section*{Introduction}

In 1980's, Witten studied two-dimensional quantum field theories and the index of Dirac operator in free loop spaces. In \cite{W87}, Witten argued that the partition function of a type II superstring as a function depending on the modulus of the worldsheet elliptic curve, is an elliptic genus. In \cite{W86}, Witten derived a series of twisted Dirac operators from the free loop space $LZ$ on a compact spin manifold $Z$. The elliptic genera constructed by Landweber-Stong \cite{LS88} and Ochanine \cite{O87} in a topological way turn out to be the indices of these elliptic operators. Motivated by physics, Witten conjectured that these elliptic operators should be rigid. The Witten conjecture was first proved by Taubes \cite{T89} and Bott-Taubes \cite{BT89}. In \cite{Liu96}, using the modular invariance property, Liu presented a simple and unified proof of the Witten conjecture. A useful reference in this area is the book by Hirzebruch, Berger and Jung \cite{HBJ}.  We also mention the recent generalisation of these genera by the authors in \cite{HM18} to noncompact manifolds with noncompact almost connected Lie groups acting properly and cocompactly. Let us be more precise as follows. 

Let $Z$ be a $4r$ dimensional compact smooth spin manifold and $V$ be a rank $2l$ spin vector bundle over $Z$.
As in \cite{W86}, let
\be \label{reducedwittenbundle} \Theta(\widetilde{T_\CC Z})=\bigotimes_{n=1}^\infty
S_{q^n}(T_\CC Z-\C^{4k}),\ee
where $q=e^{2\pi i \tau}$,  be the Witten bundle, which is an element in $K(Z)[[q]]$. Construct the bundles 
\be \Theta(\widetilde{V})=\bigotimes_{u=1}^\infty
\Lambda_{-q^{u}}(V_\CC-\C^{2l}),\ \ \ \ \Theta_1(\widetilde{V})=\bigotimes_{u=1}^\infty
\Lambda_{q^u}(V_\CC-\C^{2l}),\ee
which are also elements in $K(Z)[[q]]$, and 
\be 
\Theta_2(\widetilde{V})=\bigotimes_{v=1}^\infty
\Lambda_{-q^{v-{1\over2}}}(V_\CC-\C^{2l}),\ \ \ \ \  \Theta_3(\widetilde{V})=\bigotimes_{v=1}^\infty \Lambda_{q^{v-{1\over2}}}(V_\CC-\C^{2l}),\ee
which are elements in $K(Z)[[q^{1/2}]]$. Let $\hat A(Z)$ be the $\hat A$-class of $TZ$ and $\Delta^\pm(V)$ the spinor bundles of $V$. The bundle twisted elliptic genera are defined to be the {\bf integral} $q$-series as follows,
\be \label{E} Ell(Z,V, \tau):=\int_Z \hat A(Z)\mathrm{Ch}(\Theta(\widetilde{T_\CC Z}))\mathrm{Ch}((\Delta^+(V)-\Delta^-(V))\otimes \Theta(\widetilde{V}))\in \Z[[q]],\ee
\be \label{E} Ell_1(Z,V, \tau):=\int_Z \hat A(Z)\mathrm{Ch}(\Theta(\widetilde{T_\CC Z}))\mathrm{Ch}((\Delta^+(V)+\Delta^-(V))\otimes \Theta_1(\widetilde{V}))\in \Z[[q]],\ee
\be \label{E2} Ell_2(Z,V, \tau):=\int_Z \hat A(Z)\mathrm{Ch}(\Theta(\widetilde{T_\CC Z}))\,\mathrm{Ch}(\Theta_2(\widetilde{V}))\in \Z[[q^{1/2}]],\ee
\be \label{E3} Ell_3(Z,V, \tau):=\int_Z \hat A(Z)\mathrm{Ch}(\Theta(\widetilde{T_\CC Z}))\,\mathrm{Ch}(\Theta_3(\widetilde{V}))\in \Z[[q^{1/2}]].\ee
Without the presence of $V$, 
\be W(Z)=\int_Z \hat A(Z)\mathrm{Ch}(\Theta(\widetilde{T_\CC Z}))\in \Z[[q]] \ee
is the famous Witten genus.
By the Atiyah-Singer index theorem, these bundle twisted elliptic genera have analytical interpretation as follows. Let ${\not\! \partial}^+$ be the spin Dirac operator on $Z$. Then 
\be \label{E} Ell(Z,V, \tau)={\rm Index}({\not\! \partial}^+\otimes\Theta(\widetilde{T_\CC Z})\otimes(\Delta^+(V)-\Delta^-(V))\otimes \Theta(\widetilde{V})))\in \Z[[q]],\ee
\be \label{E1} Ell_1(Z,V, \tau)={\rm Index}({\not\! \partial}^+\otimes\Theta(\widetilde{T_\CC Z})\otimes(\Delta^+(V)+\Delta^-(V))\otimes \Theta_1(\widetilde{V})))\in \Z[[q]],\ee
\be \label{E2} Ell_2(Z,V, \tau)={\rm Index}({\not\! \partial}^+\otimes\Theta(\widetilde{T_\CC Z})\otimes\Theta_2(\widetilde{V})))\in \Z[[q^{1/2}]],\ee
\be \label{E3} Ell_3(Z,V, \tau)={\rm Index}({\not\! \partial}^+\otimes\Theta(\widetilde{T_\CC Z})\otimes\Theta_3(\widetilde{V})))\in \Z[[q^{1/2}]],\ee
\be W(Z)={\rm Index}({\not\! \partial}^+\otimes\Theta(\widetilde{T_\CC Z})\in \Z[[q]]. \ee
One can show that when $p_1(Z)=p_1(V)$, these bundle twisted elliptic genera are modular forms of weight $2k$ over $SL(2, \Z), \Gamma_0(2), \Gamma^0(2)$ and $\Gamma_\theta$ respectively and when $p_1(Z)=0$, $W(Z)$ is a modular form of weight $2k$ over $SL(2, \Z)$ (see Appendix). 
Witten showed in \cite{W86, W99} that formally  ${\not\! \partial}^+\otimes\Theta(\widetilde{T_\CC Z})$ can be viewed as the Dirac operator on loop space; $(\Delta^+(V)-\Delta^-(V))\otimes \Theta(\widetilde{V}), (\Delta^+(V)+\Delta^-(V))\otimes \Theta_1(\widetilde{V}), \Theta_2(\widetilde{V})$, $\Theta_3(\widetilde{V})$ can be viewed as vector bundles over loop space; and
\begin{align*}
&{\not\! \partial}^+\otimes\Theta(\widetilde{T_\CC Z})\otimes(\Delta^+(V)-\Delta^-(V))\otimes \Theta(\widetilde{V}), \\
&{\not\! \partial}^+\otimes\Theta(\widetilde{T_\CC Z})\otimes(\Delta^+(V)+\Delta^-(V))\otimes \Theta_1(\widetilde{V}),\\
&{\not\! \partial}^+\otimes\Theta(\widetilde{T_\CC Z})\otimes\Theta_2(\widetilde{V}),\\
&{\not\! \partial}^+\otimes\Theta(\widetilde{T_\CC Z})\otimes\Theta_3(\widetilde{V})
\end{align*}
can be viewed as the Dirac operator on loop space coupled to these bundles. Taubes \cite{T89}, Bott-Taubes \cite{BT89} and Liu \cite{Liu96} proved the rigidity of these operators conjectured by Witten. In \cite{Liu95} Liu discovered a profound vanishing theorem for the Witten genus $W(Z)$. 

In this paper, we construct {\em projective elliptic genera} in the case that $Z$ is a compact oriented manifold {\em not necessarily spin} and $E$ is a {\em projective vector bundle} rather than an ordinary vector bundle on $Z$. More precisely, for such $Z$ and $E$, we construct {\bf rational} $q$-series
\be PEll(Z, E, \tau)\in \Q[[q]], \ PEll_1(Z, E, \tau)\in \Q[[q]],\ee 
\be PEll_2(Z, E, \tau)\in \Q[[q^{1/2}]], \  PEll_3(Z, E, \tau)\in \Q[[q^{1/2}]],\ee which still have both {\em  topological definition} and 
{\em analytic interpretation}. We also establish the modularity properties of these genera. The key new idea in the topological side is to introduce the {\em graded twisted Chern character} for Witten bundles constructed from projective vector bundles (see the definition in (\ref{GCh}) to (\ref{GCh3})). For the analytic interpretation, we use the projective spin Dirac operator introduced in \cite{MMS06, MMS08} and the fractional index theorem proved there. More precisely, let $\dirac^+$ be the projective spin Dirac operator associated to a fixed projective spin structure on the oriented compact manifold $Z$. Let $E$ be a projective complex vector bundle. The twisted projective spin Dirac operator $\dirac^+_E$ acts on $S\otimes E$, where $S$ is  the projective vector bundle of spinors associated to the Azumaya bundle given by the complex Clifford algebra bundle ${\text{Cliff}}(T_\CC M)$.  The index of $\dirac^+_E$ has the usual expression in terms of characteristic classes, it is no longer an integer, but only a fraction in general.

In 1983, the physicists Alvarez-Gaum\'e and Witten \cite{AGW} discovered the ``miraculous cancellation" formula for gravitational anomaly, relating index of the signature operator to indices of twisted Dirac operators in dimension 12. Liu \cite{Liu95cmp} generalised their formula to higher dimension and also allow general bundle twisting rather than the tangent bundle by developing modularities of certain characteristic forms . In this paper, we give a ``projective miraculous cancellation" formulae for indices of projective Dirac operators twisted by projective vector bundles (Theorem \ref{projmirac}) and the 12 dimensional local formula (Theorem \ref{projmirac12}) following Liu's method. 

The Witten genus can be viewed as a morphism from the String bordism ring to the ring of integral modular forms, $W:\Omega_{4r}^{String}\longrightarrow MF^\Z(SL(2, \Z)).$ It has a lift (called $\sigma-orientation$) \cite{AHS} in homotopy theory $\sigma: MString \longrightarrow tmf,$ where $tmf$ is the deep and powerful theory of topological modular forms, constructed originally by Hopkins and  Miller \cite{Hopkins}, with a new construction due to Lurie \cite{Lurie}. 
Our projective elliptic genus $PEll(Z, E, \tau)$ is a rational modular form over $SL(2, \Z)$ when the first rational Pontryagin classes of the projective bundle $E$ and $TZ$ are equal. It seems likely that there is a similar lift in homotopy theory for $PEll(Z, E, \tau)$ and a refinement of our projective genera to a version of elliptic cohomology, but we will not address this here. 

Other important approaches for construction of Witten genus and elliptic genera include chiral de Rham complex \cite{GMS00},\cite{GMS04}, \cite{BL00}, \cite{Ch12} and the application of factorization homology \cite{Cos10}. Our projective genera are twisted version of the usual genera in the presence of a $B$-field. We plan  to look at the construction of our projective genera in these approaches in the presence of a $B$-field. 

As an interesting application of the projective elliptic genera, we give a construction of the {\em elliptic pseudodifferential genera} for any elliptic pseudodifferential operator. More precisely, let $Z$ be a $4r$-dimensional compact oriented manifold. Choose and fix a projective spin$^c$ structure on $Z$ \cite{MMS06, MMS08}. Let $P$ be any elliptic pseudodifferential operator on $Z$. We are able to construct elliptic genera type invariants for $P$: $Ell(P, \tau)$ and $Ell_i(P, \tau), i=1,2,3$.  When $Z$ is a spin$^c$ manifold and $P$ is the spin$^c$ Dirac operator, $Ell(P, \tau)=0$ and $Ell_i(P, \tau)$ degenerate to the Witten genus of $Z$, which is a rational $q$-series on spin$^c$ manifold (see Example \ref{spinc}). This is similar to looking at the $\hat A$-genus on spin$^c$ manifolds in \cite{MMS06}.  The key step in our construction is to implicitly use a projective vector bundle coming from $P$ by using the projective spin$^c$ structure.  We also use the {\em Schur functors} (c.f. \cite{FH}) to understand the Witten bundles of tensor product.  Actually we give our construction in a more general setting, namely for projective elliptic pseudodifferential operator which has its own twist. 

The paper is organized as follows. In Section 1, we introduce the graded twisted Chern character on the Witten bundles constructed from a projective vector bundle and then construct the projective elliptic genera as well as study their modularities. In Section 2, we first review the index theory for projective elliptic operators in \cite{MMS06,MMS08} and then give the analytic interpretation of the projective elliptic genera.  We also give the ``projective miraculous cancellation" formula in this section. As an application, we construct projective elliptic pseudodifferential genera for projective elliptic pseudodifferential operator in Section 3.

\bigskip

\noindent{\bf Acknowledgements.} Fei Han was partially supported by the grant AcRF R-146-000-218-112 from National University of Singapore. He is indebted to Prof. Weiping Zhang for helpful suggestions and discussions. Varghese Mathai was supported by funding from the Australian Research Council, through the Australian Laureate Fellowship FL170100020. He gave a talk based on this paper at the conference, Microlocal methods in Analysis and Geometry (In honor of Richard Melrose’s 70th birthday) CIRM, Luminy, May 6-10 2019, and would like to thank Isadore Singer and Richard Melrose for past collaboration related to this research.


\section{Projective elliptic genera}

In this section, we give the topological construction of projective elliptic genera and study their modular properties.  We will give the the analytic interpretation of them in the next section by using the index theorem of projective elliptic operators in \cite{MMS06,MMS08}. 

\subsection{Projective vector bundles}
Let $Z$ be a smooth manifold with Riemannian metric and the Levi-Civita connection $\nabla^Z$. Let $Y$ be a principal $PU(N)$ bundle over $Z$,
$$
\begin{CD}
PU(N) @>>> \,  Y\\
&& @V \phi VV \\
&& Z \end{CD}
$$
The {\em  Dixmier-Douady invariant}
of $Y$,
 {}
 $$DD(Y) = \delta(Y) \in {\rm Torsion}(
H^3(Z, \mathbb Z))$$  
is the obstruction to lifting the principal $PU(N)$-bundle $Y$ to a principal $U(n)$-bundle
(the construction also works for any principal $G$ bundle $P$ over $Z$,
together with a central extension $\widehat G$ of $G$). Let $M_N(\mathbb C)$ be the algebra of $N\times N$ complex matrices. The  {associated algebra bundle} 
 $$
\cA = Y\times_{PU(N)} M_N(\mathbb C)
$$  
is called the associated {\em  Azumaya  bundle}.

A {\em projective vector bundle} on $Z$ is {\em not} a global bundle on $Z$, but rather it is a vector bundle $E\to Y$,
where $E$ also satisfies 
\be \label{module}
{\mathcal L}_g \otimes E_y \cong E_{g.y}, \qquad g\in PU(N), \; y\in Y,
\ee
where 
$\mathcal L= U(n)\times_{U(1)} \mathbb C\to PU(N)$ 
is the 
{primitive line bundle}, 
$$
{\mathcal L}_{g_1} \otimes {\mathcal L}_{g_2} \cong {\mathcal L}_{g_1. g_2}, \qquad g_i\in PU(N).
$$This gives a {projective action} of $PU(N)$ on $E$,
i.e. an action of $U(n)$ on $E$ s.t. the center $U(1)$ acts as scalars. One can define the {\em twisted Chern character} \cite{BCMMS}, $\mathrm{Ch}_{\delta{(Y)}}(E)\in H^{even}(Z, \Q)$ for the projective bundle $E$.

\subsection{Projective elliptic genera} Suppose $Z$ to be closed, oriented and $4r$-dimensional. Let $E$ be an Hermitian projective vector bundle of rank $l$ over $Z$, which is a Hermitian vector bundle over $Y$ with the action in $(\ref{module})$. Let $\nabla^E$ be an Hermitian connection on $E$ compatible with the action. Let $B\in \Omega^2(Y)$ be a curving of $Y$. Let $H\in \Omega^3(Z)$ representing the Dixmier-Douady class of $Y$ such that $\pi^*H=dB.$ As the Dixmier-Douady class is torsion element, $H$ is exact on $Z$. 

The condition (\ref{module}) implies that $\Tr(BI+R^E)^n$ descends to a degree $2n$ differential form on $Z$ for all $n\geq 0$. 

Define {\em  the first rational projective Pontryagin class of $E$}$, \mathfrak{p}_1(E)\in H^4(Z, \Q)$, such that 
\be \pi^*(\mathfrak{p}_1(E))=\frac{1}{4\pi^2}[\Tr(BI+R^E)^2]\in H^4(Y, \Q). \ee

The tensor product $E^{\otimes k}$ satisfies
\be 
{\mathcal L}_g^{\otimes k} \otimes E_y^{\otimes k}  \cong E_{g.y}^{\otimes k} , \qquad g\in PU(N), \; y\in Y.
\ee Therefore we see that $\Tr\left(kBI+R^{E^{\otimes k}}\right)^n$ descends to a degree $2n$ differential form on $Z$ for all $n\geq 0$. 
We can define the twisted Chern character 
\be \mathrm{Ch}_{kH}(E^{\otimes k})=\Tr\left(\exp\left[\frac{i}{2\pi}\left(kBI+R^{E^{\otimes k}}\right)\right]\right).   \ee
As the exterior bundle $\wedge^k E$ is a subbundle of $E^{\otimes k}$, one can also define the twisted Chern character $\mathrm{Ch}_{kH}(\wedge^k E)$.

Recall that for an indeterminate $t$ (c.f. \cite{A67}), 
\be \Lambda_t(E)=\CC
|_M+tE+t^2\wedge^2(E)+\cdots,\ \ \ S_t(E)=\CC |_M+tE+t^2
S^2(E)+\cdots, \ee are the total exterior and
symmetric powers of $E$ respectively. 

Let \cite{W86}
\be \Theta(T_\CC Z)=\bigotimes_{n=1}^\infty
S_{q^n}(T_\CC Z).\ee
be the Witten bundle, which is an element in $K(Z)[[q]]$.

Let $\bar E$ be the complex conjugate of $E$, which carries the induced Hermitian metric and connection. 

Set (c.f.\cite{Liu96, Liu95})
\be \Theta(E)=\bigotimes_{u=1}^\infty
\Lambda_{-q^{u}}(E)\otimes \bigotimes_{u=1}^\infty
\Lambda_{-q^{u}}(\bar E),\ \ \  \Theta_1(E)=\bigotimes_{u=1}^\infty
\Lambda_{q^u}(E)\otimes \bigotimes_{u=1}^\infty
\Lambda_{q^u}(\bar E),\ee
which are elements in $K(Y)[[q]]$;
\be \Theta_2(E)=\bigotimes_{v=1}^\infty
\Lambda_{-q^{v-{1\over2}}}(E)\otimes \bigotimes_{v=1}^\infty
\Lambda_{-q^{v-{1\over2}}}(\bar E),\ \ \ \Theta_3(E)=\bigotimes_{v=1}^\infty
\Lambda_{q^{v-{1\over2}}}(E)\otimes \bigotimes_{v=1}^\infty
\Lambda_{q^{v-{1\over2}}}(\bar E),\ee
which are elements in $K(Y)[[q^{1/2}]]$.

In the $q$-expansion of $(\wedge^{even}E-\wedge^{odd}E)\otimes\Theta(E)$, the coefficient of $q^{n}$ is integral linear combination of terms of the form $$\wedge^{i_1}(E)\otimes\wedge^{i_2}(E)\otimes \cdots \wedge^{i_k}(E)\otimes \wedge^{j_1}(\bar E)\otimes \wedge^{j_2}(\bar E)\otimes \cdots \wedge^{j_l}(\bar E). $$ Pick out the the terms such that $(i_1+i_2+\cdots i_k)-(j_1+j_2+\cdots j_l)=m$ and denote their sum by $W_{m, n}(E)$. Then we have the expansion
\be (\wedge^{even}E-\wedge^{odd}E)\otimes\Theta(E)=\sum_{n=0}^\infty \left(\sum_{m=-\infty}^\infty W_{m, n}(E)\right)q^{n}.   \ee 
$W_{m,n}(E)$ is a vector bundle over $Z$ carrying induced Hermitian metric and connection for each $m$. It is clear that for each fixed $n$, there are only finite many $m$ such that $W_{m, n}(E)$ is nonzero. $W_{m,n}(E)$ satisfies
\be 
{\mathcal L}_g^{\otimes m} \otimes W_{m,n}(E)_y  \cong W_{m,n}(E)_{g.y}, \qquad g\in PU(N), \; y\in Y.
\ee Therefore one can define the twisted Chern character $\mathrm{Ch}_{mH}(W_{m,n}(E))$. 

Similarly in the $q$-expansion of $(\wedge^{even}E+\wedge^{odd}E)\otimes\Theta_1(E)$, the coefficient of $q^{n}$ is integral linear combination of terms of of the form $$\wedge^{i_1}(E)\otimes\wedge^{i_2}(E)\otimes \cdots \wedge^{i_k}(E)\otimes \wedge^{j_1}(\bar E)\otimes \wedge^{j_2}(\bar E)\otimes \cdots \wedge^{j_l}(\bar E). $$ Pick out the the terms such that $(i_1+i_2+\cdots i_k)-(j_1+j_2+\cdots j_l)=m$ and denote their sum by $A_{m, n}(E)$.  Then we have the expansion
\be  (\wedge^{even}E+\wedge^{odd}E)\otimes\Theta_1(E)=\sum_{n=0}^\infty \left(\sum_{m=-\infty}^\infty A_{m, n}(E)\right)q^{n}.   \ee 
$A_{m,n}(E)$ is a vector bundle over $Z$ carrying induced Hermitian metric and connection for each $m$. For each fixed $n$, there are only finite many $m$ such that $A_{m, n}(E)$ is nonzero. $A_{m,n}(E)$ satisfies
\be 
{\mathcal L}_g^{\otimes m} \otimes A_{m,n}(E)_y  \cong A_{m,n}(E)_{g.y}, \qquad g\in PU(N), \; y\in Y.
\ee We can therefore define the twisted Chern character $\mathrm{Ch}_{mH}(A_{m,n}(E))$.

One can decompose $\Theta_2(E)$ and $\Theta_3(E)$ in a similar way as
\be \Theta_2(E)=\sum_{n=0}^\infty \left(\sum_{m=-\infty}^\infty B_{m, n}(E)\right)q^{n/2},  \ee
\be \Theta_3(E)=\sum_{n=0}^\infty \left(\sum_{m=-\infty}^\infty C_{m, n}(E)\right)q^{n/2},  \ee
and define the twisted Chern characters $\mathrm{Ch}_{mH}(B_{m,n}(E))$ and  $\mathrm{Ch}_{mH}(C_{m,n}(E))$ as well. 

Define the {\em graded twisted Chern character}
\be \label{GCh}\GCh_H\left((\wedge^{even}E-\wedge^{odd}E)\otimes\Theta(E)\right)=\sum_{n=0}^\infty\left(\sum_{m=-\infty}^\infty\mathrm{Ch}_{mH}(W_{m, n}(E))\right)q^n\in \Omega^*(M)[[q]],\ee
\be \label{GCh1} \GCh_H\left((\wedge^{even}E+\wedge^{odd}E)\otimes\Theta_1(E)\right)=\sum_{n=0}^\infty\left(\sum_{m=-\infty}^\infty\mathrm{Ch}_{mH}(A_{m, n}(E))\right)q^n\in \Omega^*(M)[[q]],\ee
\be \label{GCh2} \GCh_H(\Theta_2(E))=\sum_{n=0}^\infty\left(\sum_{m=-\infty}^\infty\mathrm{Ch}_{mH}(B_{m, n}(E))\right)q^{n/2}\in \Omega^*(M)[[q^{1/2}]],\ee
\be \label{GCh3} \GCh_H(\Theta_3(E))=\sum_{n=0}^\infty\left(\sum_{m=-\infty}^\infty\mathrm{Ch}_{mH}(C_{m, n}(E)\right)q^{n/2}\in \Omega^*(M)[[q^{1/2}]].\ee

Let (compare with (\ref{reducedwittenbundle}))
\be \Theta(T_\CC Z)=\bigotimes_{n=1}^\infty
S_{q^n}(T_\CC Z). \ee

Let $\det \bar E$ be the determinant line bundle of $\bar E$, which carries the induced Hermitian metric and connection. As ther determinant is the highest exterior power, one can define the twisted Chern character $\mathrm{Ch}_{-lH}(\det \bar E)$. 

Define the {\em projective elliptic genera} by
\be \label{P1}
\begin{split}
& PEll(Z,E, \tau)\\
:=& \left(\prod_{j=1}^\infty(1-q^j)\right)^{4r-2l}\cdot\int_Z \hat A(Z)\mathrm{Ch}(\Theta(T_\CC Z))\sqrt{\mathrm{Ch}_{-lH}(\det \bar E)}\, \GCh_H\left((\wedge^{even}E-\wedge^{odd}E)\otimes\Theta(E)\right)\\
&\in \Q[[q]],
\end{split}
\ee
\be \label{P2}  
\begin{split}
&PEll_1(Z,E, \tau)\\
:=&\frac{\left(\prod_{j=1}^\infty(1-q^j)\right)^{4r}}{\left(\prod_{j=1}^\infty(1+q^j)\right)^{2l}}\cdot\int_Z \hat A(Z)\mathrm{Ch}(\Theta(T_\CC Z))\sqrt{\mathrm{Ch}_{-lH}(\det \bar E)}\,\GCh_H\left((\wedge^{even}E+\wedge^{odd}E)\otimes\Theta_1(E)\right)\\
&\in \Q[[q]],
\end{split}
\ee
\be \label{P3} PEll_2(Z,E, \tau):=\frac{\left(\prod_{j=1}^\infty(1-q^j)\right)^{4r}}{\left(\prod_{j=1}^\infty(1-q^{j-1/2})\right)^{2l}}\cdot\int_Z \hat A(Z)\mathrm{Ch}(\Theta(T_\CC Z))\,\GCh_H(\Theta_2(E))\in \Q[[q^{1/2}]],\ee
\be \label{P4} PEll_3(Z,E, \tau):=\frac{\left(\prod_{j=1}^\infty(1-q^j)\right)^{4r}}{\left(\prod_{j=1}^\infty(1+q^{j-1/2})\right)^{2l}}\cdot\int_Z \hat A(Z)\mathrm{Ch}(\Theta(T_\CC Z))\,\GCh_H(\Theta_3(E))\in \Q[[q^{1/2}]].\ee

We have the following results. 
\begin{theorem}\label{modularity} (i) If $p_1(TZ)=\mathfrak{p}_1(E)$, then $PEll(Z,E, \tau)$ is a modular form of weight $2r$ over $SL(2, \Z)$. \newline
(ii) If $p_1(TZ)=\mathfrak{p}_1(E)$, then $PEll_1(Z,E, \tau)$ is a modular form of weight $2r$ over $\Gamma_0(2)$, $PEll_2(Z,E, \tau)$ is a modular form of weight $2r$ over $\Gamma^0(2)$ and $PEll_3(Z,E, \tau)$ is a modular form of weight $2r$ over $\Gamma_\theta(2)$; moreover, we have 
\be PEll_1(Z,E, -1/\tau)=\tau^{2r}PEll_2(Z,E, \tau), \ \ PEll_2(Z,E, \tau+1)=PEll_3(Z,E, \tau).\ee 
\end{theorem}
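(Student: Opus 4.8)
The plan is to reduce the statement to the classical modularity of bundle-twisted elliptic genera on spin manifolds by passing from $Z$ (with projective bundle $E$) to an auxiliary \emph{honest} spin manifold carrying an honest vector bundle, and then transferring the known transformation laws. Concretely, first I would observe that the graded twisted Chern character $\GCh_H$ is built so that, after integration over $Z$, each characteristic-number contribution depends only on the de Rham classes $\mathrm{Ch}_{mH}(W_{m,n}(E))$, and these in turn depend only on the Chern--Weil forms $\Tr(mBI+R^{W_{m,n}(E)})^j$. The key algebraic point is that these forms can be computed, formally, from a collection of ``Chern roots'' $x_1,\dots,x_l$ attached to $E$ together with the single extra class $\tfrac12 c_1$-type correction coming from $\sqrt{\mathrm{Ch}_{-lH}(\det\bar E)}$; the shift by $B$ is universal across all summands. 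So the entire generating series $PEll(Z,E,\tau)$ is obtained by applying a \emph{fixed} symmetric power series in the $x_j$ (the same one as in Liu's formulae \cite{Liu96,Liu95}, namely the theta-quotient expressing $\mathrm{Ch}(\Theta(T_\CC Z))\cdot\mathrm{Ch}((\Delta^+-\Delta^-)(V)\otimes\Theta(\widetilde V))$ up to the normalizing $\eta$-factor $\prod(1-q^j)^{4r-2l}$) to the formal roots of $E$, integrated against $\hat A(TZ)\,\mathrm{Ch}(\Theta(T_\CC Z))$.

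Second I would make this rigorous via the standard trick for projective bundles: replace $E$ by $E\otimes \overline{\det E}^{1/N}$-type twists, or more cleanly, pass to the total space $Y$ or to a finite cover/modification where the Azumaya class trivializes, and realize the relevant characteristic forms as genuine forms pulled back from $Z$. Since $DD(Y)$ is torsion, the forms $\Tr(mBI+R^{W_{m,n}})^j$ are already globally defined on $Z$; what one needs is that the \emph{modular} input — Liu's theorem on the modularity of the characteristic forms $P_1(\tau),P_2(\tau)$ built from theta functions — applies verbatim with the roots $x_j$ of $E$ replaced everywhere by the ``twisted roots'' $x_j + \omega$, where $2\pi i\,\omega$ is the universal $B$-shift, provided $p_1(TZ)=\mathfrak p_1(E)$ guarantees the anomaly cancellation $\sum x_j^2 + (\text{shift})^2 = \sum y_j^2$ in $H^4$. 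This is exactly the hypothesis in (i) and (ii). I would then invoke Liu's modular transformation laws for the four theta-type characteristic series under $SL(2,\Z)$, $\Gamma_0(2)$, $\Gamma^0(2)$, $\Gamma_\theta$ respectively: these are identities of power series in $q$ with coefficients differential forms, so they remain valid after the substitution $x_j\mapsto x_j+\omega$ and after $\int_Z \hat A(Z)\mathrm{Ch}(\Theta(T_\CC Z))(-)$. The weight $2r$ comes, as usual, from the $4r$-dimensional degree count: each Chern root contributes weight $1/2$ under the relevant theta transformation and $\hat A$ together with the $\eta$-normalization absorbs the rest, exactly as in the Appendix's computation of the spin case.

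Third, for the relations $PEll_1(Z,E,-1/\tau)=\tau^{2r}PEll_2(Z,E,\tau)$ and $PEll_2(Z,E,\tau+1)=PEll_3(Z,E,\tau)$, I would track the effect of $\tau\mapsto -1/\tau$ and $\tau\mapsto\tau+1$ on the four Jacobi theta functions $\theta,\theta_1,\theta_2,\theta_3$ (in the normalization of \cite{Liu96,HBJ}): the modular group permutes $\{\theta_1,\theta_2,\theta_3\}$ in the standard way, and correspondingly permutes $\Theta_1(E),\Theta_2(E),\Theta_3(E)$ and the $\eta$-prefactors $\prod(1+q^j)^{-2l}$, $\prod(1-q^{j-1/2})^{-2l}$, $\prod(1+q^{j-1/2})^{-2l}$. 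Since $\GCh_H$ is defined summand-by-summand from these $\Theta_i(E)$ and the twisted Chern character is $\Q$-linear, the permutation of theta functions induces precisely the claimed permutation of the $PEll_i$, with the $\tau^{2r}$ Jacobian factor appearing from the $4r$ Chern roots under $\theta_j(-1/\tau,\cdot)$. The bookkeeping here is identical to the integral case; the only new ingredient is checking that the $B$-shift $\omega$ is inert under $\tau\mapsto\tau\pm 1$ (it is, being $q$-independent) and transforms correctly under $\tau\mapsto-1/\tau$ together with the roots.

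The main obstacle I anticipate is \emph{not} the modularity transformation itself (which is Liu's machinery applied formally) but rather the careful verification that the graded twisted Chern character $\GCh_H$ genuinely equals the ``naive'' substitution: i.e.\ that $\GCh_H\big((\wedge^{even}E-\wedge^{odd}E)\otimes\Theta(E)\big)$ computed summand-by-summand (each $W_{m,n}$ getting its own twist $mH$) coincides, as a form-valued $q$-series, with $\Tr\exp\!\big[\tfrac{i}{2\pi}(BI+R^E)\big]$ fed into the appropriate theta-expansion — in other words, that the $m$-grading of the twist is exactly compatible with the $B$-weight grading in the exterior/symmetric algebra. Establishing this compatibility (essentially: the $U(1)$-weight of a tensor-monomial in $E$ and $\bar E$ matches the power of $\mathcal L$ acting, which matches the multiple of $B$ in the descended form) is the technical heart, and once it is in place the rest is the standard modular-forms argument.
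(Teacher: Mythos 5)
Your proposal follows essentially the same route as the paper's proof: rewrite each $PEll_i$ as an integral of a theta-function quotient evaluated at the Chern roots of $TZ$ and the $B$-shifted curvature $BI+R^E$ of $E$, then run Liu's Chern-root/theta-transformation argument, with the hypothesis $p_1(TZ)=\mathfrak{p}_1(E)$ (i.e.\ $\Tr(R^{TZ})^2$ cohomologous to $\Tr(BI+R^E)^2$) killing the anomaly under $S:\tau\mapsto-1/\tau$ and the standard permutation of $\theta_1,\theta_2,\theta_3$ giving the relations among $PEll_1,PEll_2,PEll_3$. The compatibility you flag as the ``technical heart'' --- that the summand-by-summand twists $mH$ in $\GCh_H$ reproduce the naive substitution $\Tr\exp\bigl[\tfrac{i}{2\pi}(BI+R^E)\bigr]$ in the theta expansion --- is exactly the identity the paper asserts (tersely) when it writes down the closed theta-function expressions for the $PEll_i$, so your outline matches the paper's, just with that step made explicit.
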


\begin{proof} Let $g^{TZ}$ be a Riemann metric on $TZ$, $\nabla^{TZ}$ be the Levi-Civita connection and $R^{TZ}=\left(\nabla^{TZ}\right)^2$ be the curvature of $\nabla^{TZ}$.  The $\hat A$-form can be expressed as
$$\hat{A}(Z, \nabla^{TZ})={\det}^{1/2}\left({{\sqrt{-1}\over
4\pi}R^{TZ} \over \sinh\left({ \sqrt{-1}\over
4\pi}R^{TZ}\right)}\right)$$
By the Chern-Weil theory (c.f. \cite{Z01}), the Chern-Weil expression of twisted Chern characters (\cite{BCMMS}) and the definitions of Jacobi theta functions and the Jacobi identity (see Appendix), we have
\be PEll(Z,E, \tau)=\int_Z \mathrm{det}^{1\over
2}\left(\frac{R^{TZ}}{4{\pi}^2}\frac{\theta'(0,\tau)}{\theta(\frac{R^{TZ}}{4{\pi}^2},\tau)}\right)
 \mathrm{det}\left(\frac{4\pi^2}{BI+R^{E}}\frac{\theta(\frac{BI+R^{E}}{4{\pi}^2},\tau)}{\theta'(0,\tau)}\right), \ee

\be PEll_1(Z,E, \tau)=\int_Z \mathrm{det}^{1\over
2}\left(\frac{R^{TZ}}{4{\pi}^2}\frac{\theta'(0,\tau)}{\theta(\frac{R^{TZ}}{4{\pi}^2},\tau)}\right)
 \mathrm{det}\left(\frac{\theta_{1}(\frac{BI+R^{E}}{4{\pi}^2},\tau)}{\theta_{1}(0,\tau)}\right), \ee

\be PEll_2(Z,E, \tau)=\int_Z \mathrm{det}^{1\over
2}\left(\frac{R^{TZ}}{4{\pi}^2}\frac{\theta'(0,\tau)}{\theta(\frac{R^{TZ}}{4{\pi}^2},\tau)}\right)
 \mathrm{det}\left(\frac{\theta_{2}(\frac{BI+R^{E}}{4{\pi}^2},\tau)}{\theta_{2}(0,\tau)}\right),  \ee

\be PEll_3(Z, E, \tau)=\int_Z \mathrm{det}^{1\over
2}\left(\frac{R^{TZ}}{4{\pi}^2}\frac{\theta'(0,\tau)}{\theta(\frac{R^{TZ}}{4{\pi}^2},\tau)}\right)
 \mathrm{det}\left(\frac{\theta_{3}(\frac{BI+R^{E}}{4{\pi}^2},\tau)}{\theta_{3}(0,\tau)}\right).  \ee

It is known that the generators of $\Gamma_0(2)$ are $T,ST^2ST$, the generators
of $\Gamma^0(2)$ are $STS,T^2STS$  and the generators of
$\Gamma_\theta$ are $S$, $T^2$.  Applying the Chern root algorithm on the level of forms (over certain ring extension $\CC[\wedge^2_xT^*M]\subset R'$ for each $x\in M$, cf. \cite{Hu} for details) and the transformation laws of the theta functions, one can see that that when $\Tr (R^{TZ})^2$ and $\Tr (BI+R^E)^2$ are cohomologous, the anomalies arising from modular transformation $S: \tau\to -1/\tau$ vanish. 

\end{proof}


\section{Analytic definition of projective elliptic genera}
In this section, we first briefly review the fractional index theorem of Mathai-Melrose-Singer \cite{MMS06,MMS08} and then use it to give an analytic construction of the projective elliptic genera. 

\subsection{Projective elliptic operators}
For a compact manifold, $Z,$ and vector bundles $E$ and $F$ over $Z$,
the {\em  Schwartz kernel theorem}
gives a 1-1 correspondence,
$$ 
 \text{  continuous\, linear \,operators},  \quad T:C^\infty(Z, E) \longrightarrow C^{-\infty}(Z,F)
 $$
 $$
{ \Bigg\Updownarrow}\qquad\qquad\qquad { \Bigg\Updownarrow}
 $$
{$$
 \text{  distributional\, sections}, 
 \quad k_T\in C^{-\infty}(Z^2,\Hom(E, F)\otimes \Omega_R)
$$}{}
where $\Hom(E, F)_{(z,z')} = F_z\boxtimes E_{z'}^*$
is the `big' homomorphism bundle over $Z^2$ and $\Omega_R$ is the density
bundle from the right factor. 
  
When restricted to {pseudodifferential
operators}, $\quad \Psi^m(Z, E, F)$, 
get an isomorphism with the space
of {\em  conormal distributions} with
respect to the diagonal,
$I^m(Z^2,\Delta;\Hom(E,F)).$ i.e.
$$\Psi^m(Z, E, F)\quad \Longleftrightarrow\quad I^m(Z^2,\Delta;\Hom(E,F))$$
When further restricted to differential
operators $\,{\rm Diff}^m(Z, E, F)$ (which by definition have the property of being local operators) 
this becomes an isomorphism with the space of conormal distributions,
$\quad I^m_{\Delta} (Z^2,\Delta;\Hom(E,F)),\quad$ 
 with respect to the diagonal,     {\em  supported}      {\em  within the diagonal},  $\Delta$. i.e.
$${\rm Diff}^m(Z, E, F)\quad \Longleftrightarrow \quad I^m_{\Delta} (Z^2,\Delta;\Hom(E,F))$$

The previous facts  motivates our definition of projective differential and pseudodifferential 
operators when $E$ and $F$ are only projective vector bundles associated to
a fixed finite-dimensional Azumaya bundle $\cA.$

Since a projective vector bundle $E$ is not global on $Z$, one
   {   }{\em  cannot} make sense of sections of $E$, let alone 
operators acting between sections!
However, it still makes sense to talk about 
Schwartz kernels even in this case, as we explain.

Notice that $\Hom(E,F) = F \boxtimes E^*$ is a projective bundle on $Z^2$ associated to 
the Azumaya bundle,
$\cA_L\boxtimes\cA_R'$.

The restriction $\Delta^*\Hom(E,F) = \hom(E,F) $ to the diagonal 
is an ordinary vector bundle, 
it is therefore reasonable to expect that $\Hom(E,F)$ also restricts to an ordinary vector bundle
in a tubular nbd  $N_\epsilon$ of the diagonal.

In    \cite{MMS06}, it is shown that there is a canonical such choice, $\Hom^{\cA}(E,F)$, 
such that the {composition properties hold}.

This allows us to    {   }{define} the space of     {   }{projective 
pseudo- differential operators} { $\Psi^\bullet_\epsilon(Z; E, F)$} with Schwartz kernels supported in an
$\epsilon$-neighborhood $N_\epsilon$ of the diagonal $\Delta$ in  $Z^2$, with the space of 
{   {   }{conormal distributions}},
{$I^\bullet_\epsilon (N_\epsilon, \Delta ; \Hom^{\cA}(E, F)).$}
$$\Psi^\bullet_\epsilon(Z; E, F)\quad :=\quad I^\bullet_\epsilon (N_\epsilon,\Delta ;\Hom^{\cA}(E, F)).
$$

Despite    {   }{\em  not} being a space of operators, this has precisely the    {   }{
same local
structure} as in the standard case and has similar composition properties
provided supports are restricted to appropriate neighbourhoods of the
diagonal.

The space of    {   }{\em  projective }      {   }{\em  smoothing operators},
$\Psi^{-\infty}_\epsilon(Z; E, F)$ 
is defined as the smooth sections, $C^\infty_c(N_\epsilon;\Hom^{\cA}(E, F)
\otimes \pi_R^*\Omega).$

The  space of all     {   }{projective differential operators},
{ ${\rm Diff}^\bullet(Z; E, F)$} is defined as those
conormal distributions that are    {   }{supported within
the diagonal} $\Delta$ in ${Z^2}$,

$${\rm Diff}^\bullet(Z; E, F):=I^\bullet_{\Delta} (N_\epsilon,\Delta;\Hom^{\cA}(E, F)).$$

In fact, 
 ${\rm Diff}^\bullet(Z; E, F)$ is even a      {   }{\em  ring} when $E=F$.

Recall that there is a projective
bundle of spinors  $\cS = \cS^+\oplus \cS^-$
on any even dimensional oriented manifold $Z$.

There are natural spin connections on the Clifford algebra bundle
$Cl(Z)$ and $\cS^\pm$ induced from the
Levi-Civita connection on $T^*Z.$

Recall also that 
$\hom(\cS,\cS) \cong Cl(Z),$ has
an extension to $\tilde Cl(Z)$ in a tubular neighbourhood of the diagonal $\Delta$,  with 
an induced connection $\nabla$.

The    {   }{\em  projective spin Dirac operator} is defined as the distributional section
$$
{\not\! \partial}=cl\cdot\nabla_{L}(\kappa_{Id}),\qquad \kappa_{Id}=\delta (z-z') Id_S
$$

Here
$\nabla_{L}$ is the connection $\nabla$ restricted to the left variables with
$cl$ the contraction given by the Clifford action of $T^*Z$ on the
left.

As in the usual case, the  projective 
spin Dirac operator ${\not\! \partial}$ is    {   }{\em  elliptic} 
and odd with respect to  $\bbZ_2$ grading 
of $\cS$.

The    {   }{\em  principal symbol map} is well defined for
conormal distributions, leading to the globally defined symbol map,
 {}
$$\sigma: \Psi^m_\epsilon(Z; E, F) \longrightarrow C^\infty(T^*Z, \pi^*\hom(E,F)),$$
homogeneous of
degree $m$; here  $\hom(E,F),$ is a  globally defined {\em  ordinary vector bundle} with fibre 
$\hom(E,F)_z = F_z \otimes E_z^*.$
Thus    {   }{\em  ellipticity} is well defined, as the
   {   }{\em  invertibility of this symbol}.

{ Equivalently}, $A \in \Psi^m_{\epsilon/2}(Z; E, F)$ is    {   }{elliptic}
if there exists a parametrix $B \in \Psi^{-m}_{\epsilon/2}(Z; F, E)$ and  smoothing operators
$\; Q_R \in \Psi^{-\infty}_{\epsilon}(Z; E, E),\;$  $Q_L\in \Psi^{-\infty}_{\epsilon}(Z; F, F)\;$ 
such that
 {}
$$
BA = I_E - Q_R, \qquad\qquad AB = I_F - Q_L
$$

The    {   }{\em  trace functional} is defined on projective smoothing operators\,\,
$\;
\Tr:\Psi^{-\infty}_{\epsilon}(Z;E) \to \mathbb C
\;$
as
 {}
$$
\Tr(Q)=\int_Z \tr Q(z,z).
$$
It vanishes on commutators, i.e.
$
\quad \Tr(QR-RQ)=0, \quad
$
if 
$$\quad Q\in\Psi^{-\infty}_{\epsilon/2}(Z;F,E), R\in\Psi^{-\infty}_{\epsilon/2}(Z;E,F),$$
which follows from Fubini's theorem.

The  {\em  fractional analytic index} of the projective elliptic operator
$A \in \Psi^\bullet_\epsilon(Z; E, F)$ is defined in the essentially analytic way as,
$$
{\rm Index}_a(A)=\Tr([A,B]) \in \mathbb R,
$$
where $B$ is a parametrix for $A,$ and the 
RHS is the notation for 
$
\Tr_F(AB- I_F) - \Tr_E(BA-I_E).
\quad$

For $A\in \Psi^m_{\epsilon /4}(Z;E,F)$, the Guillemin-Wodzicki
{\em  residue trace} is,
$$
\Tr_R(A)=\lim_{z\to0}z\Tr(AD(z)),
$$
where 
$D(z)\in\Psi^z_{\epsilon/4}(Z;E)$ is an    {   }{ entire family}
of $\Psi$DOs of complex order $z$ which is
elliptic and such that $D(0)=I.$ The residue trace is independent of the
choice of such a family. 

 \begin{enumerate}
\item The residue trace $\Tr_R$ vanishes on all $\Psi$DOs of
sufficiently negative order. 
\item  The residue trace $\Tr_R$ is also a trace functional, that is, 
$$
\Tr_R([A,B])=0,
$$
for $A\in \Psi^m_{\epsilon /4}(Z;E,F),\
B\in\Psi^{m'}_{\epsilon/4}(Z;F,E).$
\end{enumerate}

The {\em  regularized trace}, is defined to be the residue,
$$
\Tr_D(A)=\lim_{z\to0} \frac1z\left(z\Tr(AD(z))-\Tr_R(A)\right).
$$ For general $A$, $\Tr_D$  does depend on the regularizing family $D(z)$. But for
smoothing operators it coincides with the standard operator trace, 
$$
\Tr_D(S) = \Tr(S), \qquad \forall S\in \Psi^{-\infty}_\epsilon(Z, E).
$$

Therefore {\em the fractional analytic index} is also given by,
$$
{\rm Index}_a(A) = \Tr_D([A,B]),
$$ for a projective elliptic operator $A$, and $B$ a parametrix for $A$.

The regularized trace $\Tr_D$ is    {   }{\em  not} a trace function, but
however it satisfies the {\em  trace defect formula},
$$\Tr_D([A,B])=\Tr_R(B\delta _DA),$$
where $\delta _D$ is a {\em  derivation} acting on the full symbol algebra. It also satisfies the condition of being closed,
$$
\Tr_R(\delta _Da)=0 \quad \forall  \; a.
$$
  
Using the derivation  $\delta _D$ and the trace defect formula, 
we prove:
\begin{enumerate}
\item the {\em   homotopy invariance} of the index,
$$\frac{d}{dt}{\rm Index}_a(A_t) = 0,$$ where $\quad t\mapsto A_t \quad$ is a smooth 1-parameter family of 
projective elliptic $\Psi$DOs;
 
\item the {\em  multiplicativity property} of 
of the index, $${\rm Index}_a(A_2A_1) = {\rm Index}_a(A_1)+ {\rm Index}_a(A_2),$$ where $A_i$ for $i=1,2$ are projective elliptic $\Psi$DOs.
\end{enumerate}

An analogue of the {\em  McKean-Singer formula} holds,
$${\rm Index}_a({\not\! \partial}_E^+)=\lim_{t\downarrow0}\Tr_s(H_\chi(t)),$$
where $H_\chi(t) = \chi(H_t)$ is a globally defined,  {\em  truncated
heat kernel}, both in space (in a neighbourhood of the diagonal) and in time. The local index theorem can then be applied, thanks to the McKean-Singer formula,
to obtain the {\em  index theorem} for projective spin Dirac operators. 

\begin{theorem}[\cite{MMS06}]\label{MMS}
{The  projective 
spin Dirac operator 
 on an
even-dimensional compact oriented manifold $Z$,
has fractional analytic index,
 $$
{\rm Index}_a({\not\! \partial}^+) = \int_Z \widehat A(Z)  \in \mathbb Q.
$$}
\end{theorem}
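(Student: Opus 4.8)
The plan is to deduce the fractional index formula for the projective spin Dirac operator $\not\!\partial^+$ from the local index theorem, following the usual heat-kernel route but carried out at the level of the truncated, spatially localized kernels appropriate to the projective setting. First I would invoke the McKean--Singer formula recalled just above the statement, which expresses $\mathrm{Index}_a(\not\!\partial^+)$ as $\lim_{t\downarrow 0}\Tr_s(H_\chi(t))$, where $H_\chi(t)=\chi(H_t)$ is the heat kernel of the projective spin Laplacian $\not\!\partial^-\not\!\partial^+\oplus\not\!\partial^+\not\!\partial^-$, truncated both in space to a tubular neighborhood $N_\epsilon$ of the diagonal and in time. The point of the truncation is that the kernel of the projective Dirac Laplacian only makes sense near the diagonal (since the projective spinor bundle $\cS$ is not global), but for the small-time asymptotics only a neighborhood of the diagonal matters, so $\Tr_s(H_\chi(t))$ has the same $t\to 0$ asymptotic expansion as in the classical case.

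The key step is then the pointwise (Getzler-type) rescaling argument. Because the full symbol data of $\not\!\partial$ — the Clifford action, the induced spin connection $\nabla$ on $\tilde{Cl}(Z)$ and $\cS^\pm$ coming from the Levi-Civita connection — agrees locally with the classical data, the local index density computation is identical to the standard one: in normal coordinates around a point $z_0\in Z$, rescaling the Clifford variables and time produces in the limit the harmonic-oscillator kernel, and Mehler's formula gives the supertrace of the restriction to the diagonal as the $\hat A$-form $\hat A(Z,\nabla^{TZ})=\det^{1/2}\!\left(\frac{\sqrt{-1}R^{TZ}/4\pi}{\sinh(\sqrt{-1}R^{TZ}/4\pi)}\right)$, with no extra twist since there is no auxiliary bundle $E$ here. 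Integrating the top-degree part over $Z$ yields $\int_Z \hat A(Z)$. The only subtlety beyond the classical proof is bookkeeping: one must check that the spatial truncation $\chi$ does not contribute to the constant term in the $t\to 0$ expansion (it affects only exponentially small remainders), so that $\lim_{t\downarrow 0}\Tr_s(H_\chi(t))$ equals the integral of the local index density; this is where the estimates in \cite{MMS06} on off-diagonal decay of the heat kernel are used.

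Finally I would note that $\mathrm{Index}_a(\not\!\partial^+)$ is a well-defined real number by the general theory of the fractional analytic index developed above (via $\Tr_D([{\not\!\partial}^+,B])$ and its independence of the parametrix $B$ and regularizing family $D(z)$), and that the resulting value $\int_Z\hat A(Z)$ is rational: indeed $\int_Z\hat A(Z)$ is the $\hat A$-genus paired against $[Z]$, which for an oriented closed $Z$ lies in $\mathbb Q$ (it is an integer exactly when $Z$ is spin, and in general a fraction controlled by the obstruction to a spin structure, i.e. the Dixmier--Douady class of the Azumaya bundle $Cl(T_\CC Z)$). I expect the main obstacle to be purely technical rather than conceptual: making rigorous that the truncated supertrace $\Tr_s(H_\chi(t))$ is legitimately computed by the classical local index density despite $H_\chi(t)$ not being the kernel of an honest operator on global sections — i.e. justifying the interchange of the $t\to 0$ limit with integration over the diagonal in $N_\epsilon$ and controlling the cutoff error. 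All of this is supplied by the analytic framework of \cite{MMS06,MMS08} summarized above, so the proof amounts to assembling the McKean--Singer formula, the standard Getzler rescaling, and these localization estimates.
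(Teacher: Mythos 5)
Your proposal follows exactly the route the paper indicates (and which is carried out in \cite{MMS06}): the McKean--Singer formula for the spatially and temporally truncated heat kernel $H_\chi(t)$, followed by the standard local index theorem / rescaling computation of the small-time supertrace, with the only new work being the control of the cutoff errors. The paper itself only cites this theorem and sketches precisely this strategy, so your proposal is correct and essentially the same approach; the parenthetical claim that integrality of $\int_Z\hat A(Z)$ characterizes spin manifolds is inaccurate but plays no role in the argument.
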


Recall that $Z={\mathbb C}P^{2n}$ is an oriented but  {\em  non-spin} (actually spin$^c$) manifold such that  
$\displaystyle\int_Z{\widehat A} (Z) \not\in \mathbb Z$, e.g.

 {}
$$
\begin{aligned}
Z ={\mathbb C} P^2\qquad & \Longrightarrow\qquad {\rm Index}_a({\not\! \partial}^+)=-1/8.\\
Z = {\mathbb C} P^{4} \qquad &\Longrightarrow\qquad {\rm Index}_a({\not\! \partial}^+) = 3/128.
\end{aligned}
$$

\subsection{Transversally elliptic operators and projective elliptic operators}
Recall that projective half spinor bundles $\cS^\pm$ on $Z$ can be realized as
$Spin(n)$-equivariant honest vector bundles, $(\hat \cS^+,\hat \cS^-),$
over the total space of the oriented frame bundle $\cP$ 
and in which the center, $\bbZ_2,$ acts
as $\pm 1$, as follows:\\   the conormal bundle $N$ to the fibres of $\cP$
has vanishing $w_2$-obstruction, and $\hat \cS^\pm$ are just the 1/2 spin 
bundles of $N$.

One can define the $Spin(n)$-equivariant 
transversally elliptic Dirac operator  $\hat{\not\!\partial}^\pm$  
using the Levi-Civita connection on  $Z$ together with the 
Clifford contraction, where    {   }{\em  transverse ellipticity} means 
that the principal symbol is invertible when restricted  to directions that are conormal to the fibres.

The nullspaces of  $\hat{\not\!\partial}^\pm$ are 
infinite dimensional unitary representations of $Spin(n)$.
The transverse ellipticity implies that the characters of these representations
are    {   }{\em  distributions} on the group $Spin(n)$. In particular, the 
   {   }{\em  multiplicity} of each irreducible 
unitary representation in these nullspaces is    {   }{\em  finite},
and grows at most polynomially.  

The    {   }{\em  $Spin(n)$-equivariant index} of $\hat{\not\!\partial}^+$ is defined to be the following
distribution on  $Spin(n)$, 
$$
{\rm Index}_{Spin(n)}(\hat{\not\!\partial}^+) = {\rm Char}({\rm Nullspace}((\hat{\not\!\partial}^+))
- {\rm Char}({\rm Nullspace}(\hat{\not\!\partial}^-))
$$
An alternate,   {\em  analytic} description of the  $Spin(n)$-equivariant index of $\hat{\not\!\partial}^+$
is: for a function of compact support $\chi\in\CIc(G),$ 
the action of the group induces a graded operator
$$
T_{\chi}:C^\infty(\cP;\hat\cS)\longrightarrow C^\infty(\cP;\hat\cS),\quad
T_{\chi}u(x)=\int_{G}\chi(g)g^*udg,
$$
which is smoothing along the fibres.  
 $\hat{\not\!\partial}^+$ has a microlocal parametrix $Q,$  in the 
 directions that are conormal to the fibres (ie along $N$).
Then for any $\chi\in\CIc(G),$ 
$$
T_{\chi}\circ ( \hat{\not\!\partial}^+\circ Q-I_-)\in\Psi^{-\infty}(\cP;\hat\cS^-);
\quad
T_{\chi}\circ (Q \circ \hat{\not\!\partial}^+-I_+)\in\Psi^{-\infty}(\cP;\hat\cS^+)
$$
are smoothing operators. The     {   }{\em  $Spin(n)$-equivariant 
index} of $\hat{\not\!\partial}^+$,
evaluated at $\chi\in\CIc(G)$, is also given by:  
$$
{\rm Index}_{Spin(n)}(\hat{\not\!\partial}^+)(\chi)=\Tr(T_{\chi}\circ ( \hat{\not\!\partial}^+\circ Q-I_-))-
\Tr(T_{\chi}\circ (Q \circ \hat{\not\!\partial}^+-I_+))
$$
 \begin{theorem}[\cite{MMS08}]
Let  $\pi: \cP^2 \to Z^2$ denote the projection.
The pushforward map, $\pi_*$, maps the Schwartz kernel of the $Spin(n)$-transversally elliptic
Dirac operator to the projective Dirac operator: 
That is, 
$$\pi_*(\hat{\not\!\partial}^\pm) = {\not\!\partial}^\pm.$$
\end{theorem}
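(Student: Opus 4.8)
The plan is to prove the identity at the level of Schwartz kernels, tracking under the pushforward $\pi_*$ the three pieces of data out of which each Dirac operator is built: the underlying conormal distribution, the spinor bundle coefficients, and the Clifford contraction together with the spin connection. First I would note that, since $\hat{\not\!\partial}^\pm$ is a genuine differential operator on $\cP$, its Schwartz kernel $\hat\kappa^\pm$ lies in $I^\bullet_{\Delta_{\cP}}\bigl(\cP^2,\Delta_{\cP};\Hom(\hat\cS^+,\hat\cS^-)\otimes\pi_R^*\Omega\bigr)$, i.e.\ is conormal and supported \emph{within} the diagonal $\Delta_{\cP}$; moreover, being built from the Levi-Civita connection of $Z$ pulled up to $\cP$ followed by Clifford contraction, $\hat{\not\!\partial}^\pm$ differentiates only in the directions conormal to the fibres of $\cP$ and acts as the identity along the fibres. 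Writing $\phi:\cP\to Z$ for the frame bundle projection, $\pi=\phi\times\phi:\cP^2\to Z^2$ is a fibre bundle with compact fibre $Spin(n)\times Spin(n)$, and its restriction to $\Delta_{\cP}$ is (under $\Delta_{\cP}\cong\cP$, $\Delta_Z\cong Z$) the proper map $\phi$. I would carry out all constructions in the tubular neighbourhood $N_\epsilon$ of $\Delta_Z$, which is the natural domain of projective differential operators and over which, by \cite{MMS06}, $\Hom(\cS^+,\cS^-)$ is represented by the canonical ordinary extension $\Hom^{\cA}(\cS^+,\cS^-)$.

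Next I would make sense of $\pi_*\hat\kappa^\pm$ and identify its coefficients. Integration over the fibres of $\pi$ splits into two steps: along the fibre-difference directions $\hat\kappa^\pm$ is the identity (because $\hat{\not\!\partial}^\pm$ does not differentiate along fibres), so this is a clean restriction of $\hat\kappa^\pm$ to the fibre product $\cP\times_Z\cP\cong\cP\times Spin(n)$; along the remaining $Spin(n)$-factor, the $Spin(n)$-equivariance of $\hat{\not\!\partial}^\pm$ and of $\hat\cS^\pm$ (with the centre acting by $\pm1$) lets the restricted kernel descend, using normalized Haar measure, to a conormal distribution on $Z^2$ supported in $\Delta_Z$. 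On the bundle side, $N|_{\Delta_{\cP}}$ is canonically $\phi^*T^*Z$, so the half-spinor bundles $\hat\cS^\pm$ of $N$ descend precisely to the projective half-spinor bundles $\cS^\pm$ of $\mathrm{Cliff}(T_\CC Z)$, and $\Hom(\hat\cS^+,\hat\cS^-)=\widetilde{Cl}(N)$ is carried to the canonical extension $\Hom^{\cA}(\cS^+,\cS^-)\cong\widetilde{Cl}(Z)$ near $\Delta_Z$, while the density factor goes to $\pi_R^*\Omega$ on $Z$. In particular $\pi_*(\kappa_{Id})=\pi_*\bigl(\delta(p-p')\,Id_{\hat\cS}\bigr)=\delta(z-z')\,Id_{\cS}=\kappa_{Id}$, and $\pi_*\hat\kappa^\pm\in\mathrm{Diff}^\bullet(Z;\cS^+,\cS^-)$.

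Finally I would check that $\pi_*$ intertwines the remaining constituents. The spin connection $\hat\nabla$ on $\hat\cS^\pm$ is induced from the Levi-Civita connection on $Z$ pulled up to $N$, so under the identifications above it corresponds exactly to the induced spin connection $\nabla$ on $\cS^\pm$ (equivalently on $\widetilde{Cl}(Z)$) appearing in ${\not\!\partial}=cl\cdot\nabla_{L}(\kappa_{Id})$; likewise the Clifford contraction $cl$ in $\hat{\not\!\partial}^\pm$ acts only in the fibre-conormal directions, which $\pi$ maps isomorphically onto the $T^*Z$-directions, so it pushes forward to the Clifford contraction of $T^*Z$ on $\cS$. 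Since these operations, and the differentiation in $\hat{\not\!\partial}^\pm$, are purely horizontal and lift their counterparts on $Z$, $\pi_*$ commutes with each of them, whence
$$
\pi_*\bigl(\hat{\not\!\partial}^\pm\bigr)=\pi_*\bigl(cl\cdot\hat\nabla_{L}(\kappa_{Id})\bigr)=cl\cdot\nabla_{L}\bigl(\pi_*\kappa_{Id}\bigr)=cl\cdot\nabla_{L}(\kappa_{Id})={\not\!\partial}^\pm .
$$

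The main obstacle is the content of the first two steps: making $\pi_*\hat\kappa^\pm$ a well-defined conormal distribution and verifying that it lands in the \emph{canonical} extension $\Hom^{\cA}(\cS^+,\cS^-)$ of \cite{MMS06}, not merely in some extension over $N_\epsilon$ --- equivalently, that the $Spin(n)$-equivariant structure on $\hat\cS^\pm$ (centre acting by $\pm1$) is exactly the datum trivializing the Azumaya twisting near the diagonal in the preferred way, compatibly with the composition properties. Once this bookkeeping near $\Delta_{\cP}$ and $\Delta_Z$ is settled, the remaining identifications are formal.
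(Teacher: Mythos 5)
The paper does not actually prove this statement — it is quoted verbatim, with citation, from \cite{MMS08} — but your argument follows essentially the same route as the original proof there: realize the kernel of $\hat{\not\!\partial}^\pm$ as a fibrewise-delta conormal distribution supported on $\Delta_{\cP}$, use the $Spin(n)$-equivariance of $\hat\cS^\pm$ (centre acting by $\pm1$) to push it forward under $\pi_*$ into the canonical extension $\Hom^{\cA}(\cS^+,\cS^-)$ near $\Delta_Z$, and check that $\pi_*$ intertwines $cl\cdot\nabla_L$ applied to $\kappa_{Id}$. The step you correctly single out as the crux — that the equivariant structure on $\hat\cS^\pm$ is precisely the datum producing the \emph{canonical} extension of \cite{MMS06}, compatibly with composition — is the content of the descent lemma established in \cite{MMS08}, so your sketch is sound provided that lemma is cited or reproved.
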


We will next relate these two pictures. An easy argument shows that the support of the equivariant index distribution is contained 
within the center $\bbZ_2$ of $Spin(n)$.

\begin{theorem}[\cite{MMS08}]\label{thm:equiv}
Let $\phi\in\CI(Spin(n))$ be such that :
\begin{enumerate}
\item $\phi \equiv 1$ in a neighborhood of $e$, the identity of $Spin(n)$;
\item $-e \not\in {\rm supp}(\phi)$. Then
\end{enumerate}
$$
{\rm Index}_{Spin(n)}(\hat\dirac^+\otimes \hat E)(\phi) = {\rm Index}_a(\dirac^+\otimes E)
$$
where $E$ is a projective vector bundle associated to $\cP$ on $Z$ and $\hat E$ is the lift of $E$ to $\cP$.
\end{theorem}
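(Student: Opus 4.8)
The plan is to compare the two \emph{analytic} descriptions of index that are already in hand and to bridge them by the pushforward $\pi_*\colon\cP^2\to Z^2$. On the projective side, the fractional analytic index of the twisted projective spin Dirac operator is ${\rm Index}_a(\dirac^+\otimes E)=\Tr_D([\dirac^+_E,B])$ for \emph{any} projective parametrix $B\in\Psi^{-1}_\epsilon(Z;\cS^+\otimes E,\cS^-\otimes E)$. On the equivariant side, the displayed analytic formula reads ${\rm Index}_{Spin(n)}(\hat\dirac^+\otimes\hat E)(\phi)=\Tr\big(T_\phi\circ((\hat\dirac^+\otimes\hat E)\circ Q-I_-)\big)-\Tr\big(T_\phi\circ(Q\circ(\hat\dirac^+\otimes\hat E)-I_+)\big)$, where $Q$ is a parametrix for $\hat\dirac^+$ in the microlocal directions conormal to the fibres of $\cP\to Z$. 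Since the preceding theorem already matches the operators, $\pi_*(\hat\dirac^\pm)=\dirac^\pm$, the task reduces to matching the parametrices and the two trace functionals.

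First I would use the hypotheses on $\phi$ to localise near the identity. The equivariant index distribution of $\hat\dirac^+\otimes\hat E$ is supported in the centre $\bbZ_2=\{e,-e\}$ of $Spin(n)$; since $\phi\equiv1$ near $e$ and $-e\notin{\rm supp}(\phi)$, writing $\phi=\phi_0+\phi_1$ with $\phi_0$ supported in an arbitrarily small neighbourhood $U$ of $e$ and ${\rm supp}(\phi_1)\cap\bbZ_2=\emptyset$ kills the $\phi_1$ pairing, so we may assume ${\rm supp}(\phi)\subset U$. For $U$ small the fibre-smoothing operator $T_\phi$ has Schwartz kernel supported in an $\epsilon$-neighbourhood of the diagonal of $\cP^2$ with $\epsilon\to0$ as $U\to\{e\}$, and its compositions $T_\phi\circ Q$, $T_\phi\circ((\hat\dirac^+\otimes\hat E)\circ Q-I_-)$, $T_\phi\circ(Q\circ(\hat\dirac^+\otimes\hat E)-I_+)$ are genuinely smoothing on $\cP$, with kernels supported near the diagonal, hence trace class.

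Next I would push everything down. Set $B:=\pi_*(T_\phi\circ Q)$, a projective pseudodifferential operator in $\Psi^{-1}_\epsilon(Z;\cS^+\otimes E,\cS^-\otimes E)$. The compatibility of $\pi_*$ with the relevant compositions — the key technical input, furnished by the canonical extension $\Hom^{\cA}$ and its composition law from \cite{MMS06,MMS08} — then gives $\pi_*\big(T_\phi\circ((\hat\dirac^+\otimes\hat E)\circ Q-I_-)\big)=\dirac^+_E\circ B-I$ and $\pi_*\big(T_\phi\circ(Q\circ(\hat\dirac^+\otimes\hat E)-I_+)\big)=B\circ\dirac^+_E-I$, both in $\Psi^{-\infty}_\epsilon$; in particular $B$ is a genuine projective parametrix for $\dirac^+_E$. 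Finally, integration over the $SO(n)$-fibres (Fubini) identifies the operator trace on $\cP$ of these smoothing operators with the operator trace on $Z$ of their pushdowns, and on projective smoothing operators the latter agrees with $\Tr_D$. Combining the above,
$$
{\rm Index}_{Spin(n)}(\hat\dirac^+\otimes\hat E)(\phi)=\Tr(\dirac^+_E B-I)-\Tr(B\dirac^+_E-I)=\Tr_D([\dirac^+_E,B])={\rm Index}_a(\dirac^+\otimes E).
$$

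The main obstacle is the compatibility of $\pi_*$ with composition: $\pi_*$ does not commute with operator composition in general, and one must prove that once the kernels are localised by $T_\phi$ near $e$ and all supports are kept inside small neighbourhoods of the diagonal, the pushdown of a composition with $T_\phi$ coincides with the projective composition on $Z$ built from $\Hom^{\cA}$. Keeping track of the various $\epsilon$-supports so that every intermediate composition stays inside the permitted neighbourhood, and verifying that a kernel \emph{smoothing conormal to the fibres} composed with the fibre-smoothing $T_\phi$ is genuinely trace class on $\cP$ with pushdown a projective smoothing operator, are the points that require the most care.
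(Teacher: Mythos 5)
The paper does not prove this statement at all: it is quoted verbatim from \cite{MMS08}, so there is no internal proof to compare against, and your sketch has to be measured against the argument in that reference. Your architecture is indeed the one used there: (a) the equivariant index distribution is supported in the centre $\bbZ_2\subset Spin(n)$, so the pairing with $\phi$ depends only on its germ at $e$; (b) $Spin(n)$-equivariant kernels supported near the diagonal of $\cP^2$ push forward under $\pi_*$ to sections of $\Hom^{\cA}$ near the diagonal of $Z^2$, compatibly with composition on suitably shrunk supports; (c) Fubini identifies the trace on $\cP$ of a fibre-smoothing operator composed with $T_\phi$ with the trace of its pushdown. One step, however, is asserted rather than argued and is where the real content sits: the equalities $\pi_*\bigl(T_\phi\circ((\hat\dirac^+\otimes\hat E)Q-I_-)\bigr)=\dirac^+_EB-I$ and its companion require that $\pi_*(T_\phi)$ be the \emph{identity} in the projective calculus, not merely that $\pi_*$ respect composition. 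The pushdown of the kernel of $T_\phi$ along a fibre of $\cP^2\to Z^2$ is an average $\int_{Spin(n)}\phi(g)\,\kappa_{\mathcal L_g}\,dg$; each point of the $SO(n)$-orbit has two preimages $\{g,-g\}$ in $Spin(n)$, and $-e$ acts by the nontrivial central character on $\hat\cS^\pm\otimes\hat E$. So both hypotheses on $\phi$ are used a second time here: $-e\notin{\rm supp}(\phi)$ removes the contribution from the second sheet (which would otherwise contribute with a sign, giving $0$ or $2I$ rather than $I$), and $\phi\equiv1$ near $e$ makes the remaining contribution exactly $\kappa_{Id}$ once supports are shrunk. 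You invoked the support hypotheses only to localise the distributional pairing at $e$; without also running them through the computation of $\pi_*(T_\phi)$, the displayed identities — and hence the claim that $B=\pi_*(T_\phi\circ Q)$ is a projective parametrix — are not justified. With that point supplied, the rest of your outline (trace matching via Fubini, $\Tr_D=\Tr$ on smoothing operators, independence of the fractional index from the choice of parametrix) goes through as in \cite{MMS08}.
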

 
Informally, the fractional analytic index,
of the projective Dirac operator $\dirac^+$, 
is the coefficient of the delta function (distribution) at 
 the identity in $Spin(n)$ of the $Spin(n)$-equivariant index for the associated
transversally elliptic Dirac operator $\hat{\dirac}^+$ on $\cP.$ 

\subsection{Analytic definition of projective elliptic genera}
In view of the Definition \ref{P1} to \ref{P4}, Theorem \ref{MMS} and Theorem \ref{thm:equiv}, we obtain the following analytic expressions  
\begin{align*}
PEll_2(Z,E,\tau)=& \ind_a(\dirac^+\otimes(\Theta(T_\CC Z)\otimes \Theta_2(E))\\
=&  \ind_{Spin(n)}(\hat\dirac^+ \otimes (\widehat{\Theta(T_\CC Z)\otimes \Theta_2(E)})(\phi)  \in \QQ[[q^{1/2}]];\\
PEll_3(Z,E,\tau) =& \ind_a(\dirac^+\otimes(\Theta(T_\CC Z)\otimes \Theta_3(E))\\
=&  \ind_{Spin(n)}(\hat\dirac^+ \otimes (\widehat{\Theta(T_\CC Z)\otimes \Theta_3(E)})(\phi)  \in \QQ[[q^{1/2}]],
\end{align*}
where $\phi$ is as in Theorem \ref{thm:equiv}.

When $c_1(E)$ is even and $l$ is even, the square root line bundle $\sqrt{\det \bar E}$ exists and satisfies
\be {\mathcal L}_g^{\otimes l/2} \otimes \sqrt{\det \bar E}_y  \cong \sqrt{\det \bar E}_{g.y}, \qquad g\in PU(N), \; y\in Y.
\ee
Then we further have
\be
\begin{split}
&PEll(Z,E,\tau)
\\ =& \ind_a(\dirac^+\otimes(\Theta(T_\CC Z)\otimes \sqrt{\det \bar E}\otimes (\wedge^{even}E-\wedge^{odd}E)\otimes\Theta(E))\\
=&  \ind_{Spin(n)}(\hat\dirac^+ \otimes (\widehat{\Theta(T_\CC Z)\otimes \sqrt{\det \bar E}\otimes (\wedge^{even}E-\wedge^{odd}E)\otimes\Theta(E)})(\phi)  \in \QQ[[q]];
\end{split}
\ee
\be
\begin{split}
&PEll_1(Z,E,\tau)
\\ =& \ind_a(\dirac^+\otimes(\Theta(T_\CC Z)\otimes \sqrt{\det \bar E}\otimes (\wedge^{even}E+\wedge^{odd}E)\otimes\Theta_1(E))\\
=&  \ind_{Spin(n)}(\hat\dirac^+ \otimes (\widehat{\Theta(T_\CC Z)\otimes \sqrt{\det \bar E}\otimes (\wedge^{even}E+\wedge^{odd}E)\otimes\Theta_1(E)})(\phi)  \in \QQ[[q]],
\end{split}
\ee
where $\phi$ is as in Theorem \ref{thm:equiv}.

\subsection{Projective miraculous cancellation formula}

We have the following ``projective miraculous cancellation formula" for projective Dirac operators, generalizing the celebrated Alvarez-Gaum\'e-Witten ``miraculous cancellation formula" \cite{AGW}  in dimension 12 for ordinary Dirac operators.
 
Let
\begin{align*}
\Theta(T_\CC Z)\otimes \Theta_2(E)
=&\bigotimes_{n=1}^\infty
S_{q^n}(T_\CC Z)\otimes\bigotimes_{v=1}^\infty
\Lambda_{-q^{v-{1\over2}}}(E)\otimes \bigotimes_{v=1}^\infty
\Lambda_{-q^{v-{1\over2}}}(\bar E)\\
=& \sum_{i=0}^\infty B_i(T_\CC Z, E)q^{i/2},
\end{align*}
where each $B_i(T_\CC Z, E)$ is a virtual projective bundle over $Z$. 
 
The following {\em projective miraculous cancellation} can be similarly proved as Theorem 1 in \cite{Liu95cmp} by using the modularities in Theorem \ref{modularity} and the looking at the basis of rings of modular forms over $\Gamma_0(2)$ and $\Gamma^0(2)$. 

\begin{theorem}\label{projmirac} If $p_1(TZ)=\mathfrak{p}_1(E)$, $c_1(E)$ is even and $l$ is even, then the following equality holds,
\be\label{eq:miraculous} 
\ind_a(\dirac^+\otimes \sqrt{\det \bar E}\otimes (\wedge^{even}E+\wedge^{odd}E))=\sum_{j=0}^{[\frac{k}{2}]}2^{l+k-6[\frac{k}{2}]} \ind_a(\dirac^+\otimes h_j(T_\CC Z, E)),  
\ee
where the virtual projective bundles $h_j(T_\CC Z, E), 0\leq j\leq [\frac{k}{2}]$ are canonical integral linear combination of $B_i(T_\CC M, E), 0\leq i\leq j$. 
\end{theorem}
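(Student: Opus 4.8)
The plan is to run the modular-form argument that Liu uses for the classical miraculous cancellation formula \cite{Liu95cmp}, with the Atiyah--Singer index theorem replaced by the fractional index theorem of Mathai--Melrose--Singer (Theorem~\ref{MMS} and its twisted version from \cite{MMS06,MMS08}), the ordinary Chern character replaced by the twisted one, and the modularities of Theorem~\ref{modularity} supplying the symmetry. Write $\dim Z=4k$. Since $c_1(E)$ is even and $l$ is even the square root line bundle $\sqrt{\det\bar E}$ exists, so the analytic descriptions of $PEll_1$ and $PEll_2$ from the previous subsection hold; unwinding them, and using that the graded twisted Chern character is multiplicative under tensoring by the honest bundle $\Theta(T_\C Z)$, gives
\be\label{plan:exp}
\begin{split}
PEll_1(Z,E,\tau)&=\frac{\bigl(\prod_{j\ge1}(1-q^j)\bigr)^{4k}}{\bigl(\prod_{j\ge1}(1+q^j)\bigr)^{2l}}\sum_{i\ge0}\ind_a\!\bigl(\dirac^+\otimes\sqrt{\det\bar E}\otimes A_i\bigr)q^{i},\\
PEll_2(Z,E,\tau)&=\frac{\bigl(\prod_{j\ge1}(1-q^j)\bigr)^{4k}}{\bigl(\prod_{j\ge1}(1-q^{j-1/2})\bigr)^{2l}}\sum_{i\ge0}\ind_a\!\bigl(\dirac^+\otimes B_i\bigr)q^{i/2},
\end{split}
\ee
where $A_i=A_i(T_\C Z,E)$ and $B_i=B_i(T_\C Z,E)$ are defined by $\sum_i A_i q^{i}=\Theta(T_\C Z)\otimes(\wedge^{even}E+\wedge^{odd}E)\otimes\Theta_1(E)$ and $\sum_i B_i q^{i/2}=\Theta(T_\C Z)\otimes\Theta_2(E)$. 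Both normalising products equal $1+O(q^{1/2})$, so the coefficient of $q^{0}$ in $PEll_1$ is exactly $\ind_a(\dirac^+\otimes\sqrt{\det\bar E}\otimes(\wedge^{even}E+\wedge^{odd}E))$, the left side of \eqref{eq:miraculous}.

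The second step uses modularity to connect the two sides. By Theorem~\ref{modularity}(ii) and the hypothesis $p_1(TZ)=\mathfrak p_1(E)$, $PEll_1$ is a modular form of weight $2k$ over $\Gamma_0(2)$, $PEll_2$ one of weight $2k$ over $\Gamma^0(2)$, and $PEll_1(Z,E,-1/\tau)=\tau^{2k}PEll_2(Z,E,\tau)$. The graded ring of modular forms over $\Gamma_0(2)$ is the polynomial ring $\C[8\delta_1,\varepsilon_1]$, with $8\delta_1$ of weight $2$ and $\varepsilon_1$ of weight $4$ (notation of the Appendix and \cite{Liu95cmp}), so there are unique constants $c_0,\dots,c_{[k/2]}$ with
\be\label{plan:b1}
PEll_1(Z,E,\tau)=\sum_{j=0}^{[k/2]}c_j\,(8\delta_1)^{k-2j}\,\varepsilon_1^{\,j}.
\ee
Applying $\tau\mapsto-1/\tau$, together with the transformation laws $\delta_1(-1/\tau)=\tau^{2}\delta_2(\tau)$, $\varepsilon_1(-1/\tau)=\tau^{4}\varepsilon_2(\tau)$ and the relation just quoted, produces the same coefficients on the $\Gamma^0(2)$ side:
\be\label{plan:b2}
PEll_2(Z,E,\tau)=\sum_{j=0}^{[k/2]}c_j\,(8\delta_2)^{k-2j}\,\varepsilon_2^{\,j}.
\ee

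The third step is to read off the coefficient of $q^0$. Since $8\delta_1=2+O(q)$ and $\varepsilon_1=\tfrac1{16}+O(q)$, the $q^0$-coefficient of the right side of \eqref{plan:b1} is $\sum_{j}c_j\,2^{k-2j}16^{-j}=\sum_{j}2^{k-6j}c_j$, which by the first paragraph equals the left side of \eqref{eq:miraculous}. On the other hand $8\delta_2=-1+O(q^{1/2})$ and $\varepsilon_2=q^{1/2}+O(q)$ both have integral $q$-expansions, so the $j$-th summand of the right side of \eqref{plan:b2} starts in order $q^{j/2}$ with leading coefficient $(-1)^k$; comparing \eqref{plan:b2} with the second line of \eqref{plan:exp} and inverting the resulting triangular system (with $(-1)^k$ on the diagonal) over $\Z$ expresses each $c_j$ as $(-1)^k$ times an integral linear combination of $\ind_a(\dirac^+\otimes B_0),\dots,\ind_a(\dirac^+\otimes B_j)$. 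Following \cite{Liu95cmp} one then factors out of $\sum_j 2^{k-6j}c_j$ the single power $2^{\,l+k-6[k/2]}$ and absorbs the leftover powers of $2$ (and the sign) into these combinations; this is precisely what defines the canonical virtual projective bundles $h_j=h_j(T_\C Z,E)$, each an integral linear combination of $B_0,\dots,B_j$, with $\sum_{j=0}^{[k/2]}2^{k-6j}c_j=\sum_{j=0}^{[k/2]}2^{\,l+k-6[k/2]}\ind_a(\dirac^+\otimes h_j)$, which is \eqref{eq:miraculous}.

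The step carrying the real content is this last one: one must verify that the change of basis between $\{(8\delta_2)^{k-2j}\varepsilon_2^{\,j}\}_{0\le j\le[k/2]}$ and the initial $q$-segment of $\bigl(\prod(1-q^j)\bigr)^{4k}\bigl(\prod(1-q^{j-1/2})\bigr)^{-2l}\sum_i\ind_a(\dirac^+\otimes B_i)q^{i/2}$ becomes integral after extraction of exactly the factor $2^{\,l+k-6[k/2]}$, so that the $h_j$ are genuine virtual projective bundles. The extra $2^l$ there originates from $\sqrt{\mathrm{Ch}_{-lH}(\det\bar E)}\cdot\GCh_H(\wedge^{even}E+\wedge^{odd}E)$, which equals $2^l$ times a spinor-type class of $E$ (so that in fact all of $PEll_1$ carries an overall factor $2^l$); this is exactly where the explicit $q$-expansions of $\delta_2,\varepsilon_2$, those of the normalising $\eta$-type products, and the parity hypotheses ``$c_1(E)$ even'' and ``$l$ even'' are used, and it is handled just as the corresponding integrality computation in \cite{Liu95cmp}. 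Everything else is formal, given the fractional index theorem of \cite{MMS06,MMS08}.
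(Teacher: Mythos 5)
Your proposal follows exactly the route the paper intends (the paper itself only points to Liu's Theorem~1 in \cite{Liu95cmp} and the modularities of Theorem~\ref{modularity} without writing out the argument): expand $PEll_1$ and $PEll_2$ in the bases $\{(8\delta_1)^{k-2j}\varepsilon_1^{\,j}\}$ and $\{(8\delta_2)^{k-2j}\varepsilon_2^{\,j}\}$, relate the coefficients by the $S$-transformation, read off the $q^0$-coefficient on the $\Gamma_0(2)$ side, and solve the triangular system on the $\Gamma^0(2)$ side to see that the coefficients are integral combinations of the $\ind_a(\dirac^+\otimes B_i)$. That skeleton is correct.

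The one point you must tighten is the $2^l$ bookkeeping, which you rightly identify as the step with real content but then handle in a way that is internally inconsistent. In Chern roots one has $\sqrt{\mathrm{Ch}_{-lH}(\det \bar E)}\,\GCh_H(\wedge^{even}E+\wedge^{odd}E)=\prod_i\bigl(e^{x_i/2}+e^{-x_i/2}\bigr)=2^l\prod_i\cosh(x_i/2)$, so with the definitions (\ref{P2}) and (\ref{P3}) the $\Theta_1$-side equals $2^l$ times the normalized theta expression $\prod_i\theta_1(v_i,\tau)/\theta_1(0,\tau)$, whereas the $\Theta_2$-side carries no such factor; consequently the correct transformation law is $PEll_1(Z,E,-1/\tau)=2^l\,\tau^{2k}PEll_2(Z,E,\tau)$ (exactly as in Liu), not the unrenormalized identity quoted from Theorem~\ref{modularity}. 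You cannot simultaneously use the law without the $2^l$ — which forces the \emph{same} $c_j$ in both basis expansions, hence $c_j$ merely an integral combination of the $\ind_a(\dirac^+\otimes B_i)$, from which only $2^{\,k-6[k/2]}$ can be extracted — and also invoke an overall factor $2^l$ in $PEll_1$. Once the $2^l$ is placed in the transformation law, each $c_j$ becomes $2^l$ times an integral combination of $\ind_a(\dirac^+\otimes B_0),\dots,\ind_a(\dirac^+\otimes B_j)$, and the extraction of $2^{\,l+k-6[k/2]}$ with integral $h_j$ proceeds exactly as you describe. With that correction the proof is complete.
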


When the dimension of $Z$ is 12, the local formula of the projective miraculous cancellation formula for the top (degree 12) forms reads
\begin{theorem} \label{projmirac12}
\be  
\begin{split}
&\{\hat A(TZ, \nabla^{TZ})\Ch_{\left(2^{l-1}-\frac{l}{2}\right) H}(\sqrt{\det \bar E}\otimes (\wedge^{even}E+\wedge^{odd}E))\}^{(12)}\\
=&2^{l-3}\left( \{\hat A(TZ, \nabla^{TZ})\Ch_{H}(E, \nabla^E)\}^{(12)}+(8-2l) \{\hat A(TZ, \nabla^{TZ})\}^{(12)}\right),
\end{split}
\ee
provided $\Tr(R^{TZ})^2=\Tr(BI+R^E)^2.$
\end{theorem}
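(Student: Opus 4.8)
The plan is to deduce Theorem~\ref{projmirac12} from Theorem~\ref{projmirac} by extracting the top-degree component and specializing to dimension $12$. Since $\dim Z = 12$, we have $4r = 12$, so $r = 3$ and $k = 2r = 6$ in the notation above; hence $[\frac{k}{2}] = 3$ and the sum on the right-hand side of \eqref{eq:miraculous} runs over $j = 0, 1, 2, 3$. The strategy is the usual one from Liu's proof of the miraculous cancellation formula: the modularity statements of Theorem~\ref{modularity} force both sides of \eqref{eq:miraculous}, viewed as $q$-series of characteristic forms, to be (vector-valued) modular forms of a fixed weight over $\Gamma_0(2)$ and $\Gamma^0(2)$; comparing leading $q$-coefficients in a basis of the relevant ring of modular forms pins down the combination, and then reading off the degree-$12$ part of the constant term in $q$ yields the pointwise identity.

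First I would set up the characteristic-form version. By the Chern--Weil computations displayed in the proof of Theorem~\ref{modularity}, the quantities $\ind_a(\dirac^+ \otimes \sqrt{\det \bar E} \otimes (\wedge^{\mathrm{even}}E + \wedge^{\mathrm{odd}}E) \otimes \Theta_1(E))$ and $\ind_a(\dirac^+ \otimes (\Theta(T_\CC Z) \otimes \Theta_2(E)))$ are integrals over $Z$ of explicit theta-quotient characteristic forms $P_1(\tau)$ and $P_2(\tau)$ in $\Omega^*(Z)[[q]]$ (resp. $[[q^{1/2}]]$); here I use $\Tr(R^{TZ})^2 = \Tr(BI + R^E)^2$, which is exactly the hypothesis $p_1(TZ) = \mathfrak p_1(E)$ at the level of forms, to guarantee that the $S$-transformation anomalies cancel and that the top-degree parts $\{P_i(\tau)\}^{(12)}$ are modular of weight $2r + 2 = 8$ over $\Gamma_0(2)$ and $\Gamma^0(2)$ respectively. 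Next I would invoke Theorem~\ref{projmirac}: its proof already produces the canonical integral combination $h_j(T_\CC Z, E)$ of the $B_i$'s such that $P_1(\tau) = \sum_{j=0}^{3} 2^{l + 6 - 18} P_{2,j}(\tau)$ holds as an identity of modular forms, where $P_{2,j}$ is the degree-$(\le j)$-truncated analogue built from $B_j$. Then I extract the degree-$12$ component of the constant term in $q$ on both sides.

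On the right-hand side, the constant-term/degree-$12$ contributions come only from $j = 0$ and $j = 1$: one has $B_0(T_\CC Z, E) = (\wedge^{\mathrm{even}}E + \wedge^{\mathrm{odd}}E)$-type leading term contributing $\{\hat A(TZ, \nabla^{TZ})\}^{(12)}$ weighted by a power of $2$, and $B_1$ contributing the twist by $E$, i.e. $\{\hat A(TZ, \nabla^{TZ}) \Ch_H(E, \nabla^E)\}^{(12)}$, together with a correction multiple of $\{\hat A(TZ, \nabla^{TZ})\}^{(12)}$ coming from the normalization $\prod(1-q^j)$-factors and the $\dim E = l$ shift; tracking the exact integer coefficients $2^{l-3}$, $2^{l-3}$, and $2^{l-3}(8 - 2l)$ is a bookkeeping matter involving the constant terms of the Jacobi theta quotients and the prefactors in \eqref{P1}--\eqref{P4}. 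On the left-hand side, the degree-$12$ constant term is precisely $\{\hat A(TZ, \nabla^{TZ}) \Ch_{(2^{l-1} - l/2)H}(\sqrt{\det \bar E} \otimes (\wedge^{\mathrm{even}}E + \wedge^{\mathrm{odd}}E))\}^{(12)}$, where the twist $(2^{l-1} - l/2)H$ arises because $\wedge^{\mathrm{even}}E + \wedge^{\mathrm{odd}}E$ has "total exterior rank" $2^l$ split as $2^{l-1}$ on each Clifford factor, and $\sqrt{\det\bar E}$ contributes the $-l/2$; this is the same $\Z$-grading bookkeeping already used to define $\GCh_H$ in \eqref{GCh}--\eqref{GCh1}.

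The main obstacle I anticipate is not the modular-forms input — that is essentially Liu's argument, already granted via Theorem~\ref{projmirac} — but rather verifying that the \emph{twisted} Chern character degrees match up consistently on both sides, i.e. that the various $mH$-twists attached to the graded pieces $W_{m,n}, A_{m,n}, B_{m,n}$ combine so that the constant term in $q$ carries exactly the twist $(2^{l-1} - l/2)H$ on the left and untwisted (degree-$0$) or $H$-twisted pieces on the right with no leftover fractional-twist contributions in degree $12$. Concretely one must check that, after multiplying by the rational prefactors $\left(\prod(1-q^j)\right)^{4r}/\left(\prod(1+q^j)\right)^{2l}$ etc., the $q^0$-coefficient is a genuine twisted characteristic form on $Z$ (not merely a form on $Y$), which follows from the module relations $\mathcal L_g^{\otimes m} \otimes W_{m,n}(E)_y \cong W_{m,n}(E)_{g\cdot y}$ once one confirms the relevant $m$ equals the claimed twist exponent. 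I would handle this by expanding the theta-quotients to order $q^0$, matching with the Chern--Weil formulas in the proof of Theorem~\ref{modularity}, and reconciling the normalization constants; everything else is then a direct consequence of taking the degree-$12$ part of the modular identity supplied by Theorem~\ref{projmirac}.
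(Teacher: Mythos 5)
Your overall strategy---specializing Theorem \ref{projmirac} to $\dim Z=12$ via Liu's modular-invariance method and reading off the degree-$12$ part of the $q^0$-coefficient at the level of characteristic forms---is exactly the route the paper intends (the paper offers no separate argument for Theorem \ref{projmirac12}; it presents it as the local, dimension-$12$ instance of Theorem \ref{projmirac}, proved ``as Theorem 1 in Liu''). However, your instantiation of the indices is wrong, and the error is not cosmetic: it destroys the coefficient matching that is the entire content of the theorem. In Theorem \ref{projmirac} the integer $k$ is fixed by $\dim Z=4k$ (this is the paper's convention throughout, e.g.\ $S_{q^n}(T_\CC Z-\C^{4k})$ for a $4r=4k$-dimensional manifold), so in dimension $12$ one has $k=3$, $[\tfrac{k}{2}]=1$, the sum runs over $j=0,1$ only---two terms, matching the two terms on the right-hand side of Theorem \ref{projmirac12}---and the prefactor is $2^{l+k-6[\frac{k}{2}]}=2^{l-3}$, which is precisely the $2^{l-3}$ in the statement. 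You instead set $k=2r=6$, obtaining $[\tfrac{k}{2}]=3$, a four-term sum over $j=0,\dots,3$, and the prefactor $2^{l+6-18}=2^{l-12}$. Your own displayed identity $P_1(\tau)=\sum_{j=0}^{3}2^{l-12}P_{2,j}(\tau)$ is therefore inconsistent with the $2^{l-3}$ you are trying to derive, and no bookkeeping of theta-quotient constant terms will bridge a discrepancy of $2^{9}$.

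Relatedly, your claim that ``the constant-term/degree-$12$ contributions come only from $j=0$ and $j=1$'' is a patch covering this misidentification rather than a fact: in Liu's argument every $h_j$ with $0\le j\le[\tfrac{k}{2}]$ contributes to the top-degree identity, and the reason only two terms appear in dimension $12$ is that $[\tfrac{k}{2}]=1$, not that the higher terms vanish in degree $12$. With $k=3$ the correct picture is: $B_0(T_\CC Z,E)$ is trivial and $B_1(T_\CC Z,E)=-(E\oplus\bar E)$, so $h_0$ and $h_1$ produce exactly the $\{\hat A(TZ,\nabla^{TZ})\}^{(12)}$ and $\{\hat A(TZ,\nabla^{TZ})\Ch_H(E,\nabla^E)\}^{(12)}$ terms with the common factor $2^{l-3}$ and the rank correction $(8-2l)$. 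Once the indices are corrected, the remainder of your outline---using $\Tr(R^{TZ})^2=\Tr(BI+R^E)^2$ to cancel the $S$-anomaly, expanding the theta quotients to order $q^0$, and verifying that the graded twists $mH$ on the pieces $W_{m,n}$, $A_{m,n}$, $B_{m,n}$ assemble into the twist $\bigl(2^{l-1}-\tfrac{l}{2}\bigr)H$ on the left---is the right verification to carry out and matches the paper's intent; as written, though, the proof would not yield the stated formula.
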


\section{Elliptic pseudodifferential genera}
In this section, we give an application of the projective elliptic genera by constructing  {\em projective elliptic pseudodifferential genera}  for any projective elliptic pseudodifferential operator. For references on the basics of pseudodifferential operators, see \cite{Hormander, Shubin}. Our construction of elliptic pseudodifferential genera suggests the existence of putative  $S^1$-equivariant  elliptic pseudodifferential operators on loop space that localises to the elliptic pseudodifferential genera, by a formal application of the Atiyah-Segal-Singer localisation theorem, \cite{AS2,AS3}. We also compute the elliptic pseudodifferential genera for some concrete elliptic pseudodifferential operators in this section.

Let $Z$ be a $4r$-dimensional compact oriented smooth manifold. Let $W_3(Z)\in H^3(M, \Z)$ be the third integral Stiefel-Whitney class. 
Fix a projective spin$^c$ structure on $Z$ and let $S^{\pm}_c(TZ)$  be the complex projective spin$^c$ bundle of $TZ$ with twist $W_3(Z)$.  Denote by $S_c(TZ)$ the bundle $S^+_c(TZ)\oplus S^-_c(TZ)$. Let $TZ^{\bot}$ be the stable complement of the tangent bundle $TZ$. Let $S^{\pm}_c(TZ^{\bot})$ be the projective spin$^c$ bundle of $TZ^{\bot}$ with twist $-W_3(Z)$. Denote by $S_c(TZ^{\bot})$ the bundle $S^+_c(TZ^{\bot})\oplus S^-_c(TZ^{\bot})$. Let $\nabla^{S^\pm_c(TZ^{\bot})}$ be projective Hermitian connections on $S^\pm_c(TZ^{\bot})$. Denote by $\nabla^{S_c(TZ^{\bot})}$ the $\Z_2$-graded projective Hermitian connection on $S_c(TZ^{\bot})$.
 
Let $P: C^\infty(F_0)\to C^\infty(F_1)$ be a projective pseudodifferential elliptic operator with $F_0$ and $F_1$ being projective Hermitian vector bundles over $Z$ with twist $H$. Let $\nabla_0$ and $\nabla_1$ be projective Hermitian connections on $F_0, F_1$ respectively. Denote by $\nabla^{\F}$ the $\Z_2$-graded projective Hermitian connection on the bundle $\F=F_0\oplus F_1$. There exists $m\in \Z^+$ and projective complex vector bundle $E$ on $Z$ with twist $H-W_3(Z)$ such that $TZ\oplus TZ^{\bot}\cong Z\times \R^m$ and $\F\otimes S_c(TZ^{\bot})=E^{\oplus 2^m }$. Suppose the rank of $E$ is $l$. Define {\em the first rational Pontryagin class of $P$} by 
$$\mathfrak{p}_1(P):=\mathfrak{p}_1(E)\in H^4(Z, \Q).$$
It is clear that the it is well defined.

Let $\mathbb{S}_\lambda$ be the {\em Schur functor} (c.f. Sec. 6.1 in \cite{FH}). They are indexed by Young diagram $\lambda$ and are functors from the category of vector spaces to itself . It is not hard to see that the Schur functor is a continuous functor (c.f. \cite{A67}) and therefore if $(E, \nabla^E)$ is a vector bundle with connection, then applying the Schur functor gives us a vector bundle with  connection $(\mathbb{S}_\lambda(E), \mathbb{S}_\lambda(\nabla^E))$. If $U, V$ be two vector spaces, the exterior power of a tensor product has the following nice expression via the Schur functors :
\be \Lambda^n(U\otimes V)=\bigoplus\, \mathbb{S}_\lambda(U)\otimes \bS_{\lambda'}(V),\ee
where $\bS_\lambda$ is the Schur functor with $\lambda$ running over all the Young diagram with $n$ cells, at most $\dim (U)$ rows, $\dim (V)$ columns, and $\lambda'$ being the transposed Young diagram. Hence on the projective bundle $\Lambda^n(\F\otimes S_c(TZ^{\bot}) )$, there is a projective Hermitian connection
\be \bigoplus\, \left(\mathbb{S}_\lambda(\nabla^\F)\otimes 1+1\otimes \bS_{\lambda'}(\nabla^{S_c(TZ^{\bot})})\right),\ee
Denote this connection by $\Lambda^n(\nabla^\F, \, \nabla^{S_c(TZ^{\bot})} )$. 

In the following, when we write $\psi(\nabla^\F, \, S_c(TZ^{\bot}) )$, where $\psi$ is certain operations on vector bundles constructed from exterior power, it always means the connections constructed in this way. For instance, 
$$\Theta(\nabla^\F, \, \nabla^{S_c(TZ^{\bot})} )=\bigotimes_{u=1}^\infty
\Lambda_{-q^{u}}(\nabla^\F, \, \nabla^{S_c(TZ^{\bot})} )\otimes \bigotimes_{u=1}^\infty
\Lambda_{-q^{u}}(\overline {\nabla^\F}, \, \overline{\nabla^{S_c(TZ^{\bot})} })$$
is a projective Hermitian connection on the $q$-series with virtual projective bundle coefficients, 
$$\Theta(\F \otimes S_c(TZ^{\bot}))=\bigotimes_{u=1}^\infty\Lambda_{-q^{u}}(\F\otimes S_c(TZ^{\bot}))\otimes \bigotimes_{u=1}^\infty
\Lambda_{-q^{u}}(\overline {\F\otimes S_c(TZ^{\bot})}).$$

Set
\be 
\begin{split}
&\HH(\nabla^\F, \, \nabla^{S_c(TZ^{\bot})} )\\
=&\mathrm{Ch}^{\frac{1}{2^{m+1}}}_{-2^ml(H\!\!-\!\!W_3(Z))}\!\det(\overline {\nabla^\F}, \, \overline{\nabla^{S_c(TZ^{\bot})} })\\
&\cdot\GCh^{\frac{1}{2^{m}}}_{H\!\!-\!\!W_3(Z)}\left((\wedge^{even}(\nabla^\F, \, \nabla^{S_c(TZ^{\bot})} )\!\!-\!\!\wedge^{odd}(\nabla^\F, \, \nabla^{S_c(TZ^{\bot})} ))\otimes\Theta(\nabla^\F, \, \nabla^{S_c(TZ^{\bot})} )\right),\\
\end{split}
\ee
\be 
\begin{split}
&\HH_1(\nabla^\F, \, \nabla^{S_c(TZ^{\bot})} )\\
=&\mathrm{Ch}^{\frac{1}{2^{m+1}}}_{-2^ml(H-W_3(Z))}\det (\overline {\nabla^\F}, \, \overline{\nabla^{S_c(TZ^{\bot})} })\\
&\cdot \GCh^{\frac{1}{2^{m}}}_{H-W_3(Z)}\left((\wedge^{even}(\nabla^\F, \, \nabla^{S_c(TZ^{\bot})} )+\wedge^{odd}(\nabla^\F, \, \nabla^{S_c(TZ^{\bot})} )\otimes\Theta_1(\nabla^\F, \, \nabla^{S_c(TZ^{\bot})} )\right)\\
\end{split}
\ee
and for $i=2,3$
\be 
\HH_i(\nabla^\F, \, \nabla^{S_c(TZ^{\bot})} )=\GCh^{\frac{1}{2^{m}}}_{H-W_3(Z)}\left(\Theta_i(\nabla^\F, \, \nabla^{S_c(TZ^{\bot})} )\right).
\ee

\begin{definition} For the projective elliptic pseudodifferential operator $P$,  define the projective elliptic pseudodifferential genera $Ell(P, \tau)$ and $Ell_i(P, \tau), i=1,2,3$ by the following,
\be \label{PP1}
Ell(P, \tau)=\left(\prod_{j=1}^\infty(1-q^j)\right)^{4r-2l}\cdot \int_{Z} \hat A(Z)\mathrm{Ch}(\Theta(T_\CC Z))\HH(\nabla^\F, \, \nabla^{S_c(TZ^{\bot})} ),
\ee

\be 
Ell_1(P, \tau)=\frac{\left(\prod_{j=1}^\infty(1-q^j)\right)^{4r}}{\left(\prod_{j=1}^\infty(1+q^j)\right)^{2l}}\cdot\int_{Z} \hat A(Z)\mathrm{Ch}(\Theta(T_\CC Z))\HH_1(\nabla^\F, \, \nabla^{S_c(TZ^{\bot})} ),
\ee

\be 
Ell_2(P, \tau)=\frac{\left(\prod_{j=1}^\infty(1-q^j)\right)^{4r}}{\left(\prod_{j=1}^\infty(1-q^{j-1/2})\right)^{2l}}\cdot\int_{Z}\hat A(Z)\mathrm{Ch}(\Theta(T_\CC Z))\HH_2(\nabla^\F, \, \nabla^{S_c(TZ^{\bot})} ),
\ee
and
\be 
Ell_3(P, \tau)=\frac{\left(\prod_{j=1}^\infty(1-q^j)\right)^{4r}}{\left(\prod_{j=1}^\infty(1+q^{j-1/2})\right)^{2l}}\cdot\int_{Z}\hat A(Z)\mathrm{Ch}(\Theta(T_\CC Z))\HH_3(\nabla^\F, \, \nabla^{S_c(TZ^{\bot})} ).
\ee
\end{definition}

\begin{remark} We will see from the next theorem that \newline
(i) the genera for $P$ are well defined;\newline
(ii) $Ell(P, \tau) \in \Q[[q]], \, Ell_1(P, \tau) \in \Q[[q]]$ and $Ell_2(P, \tau) \in \Q[[q^{1/2}]], \, Ell_3(P, \tau) \in \Q[[q^{1/2}]].$ 
\end{remark}

\begin{theorem}\label{promain} (i) $Ell(P, \tau)=PEll(Z, E, \tau), \, Ell_i(P, \tau)=PEll_i(Z,E, \tau), i=1, 2,3; $\newline
(ii)  If $p_1(TZ)=\mathfrak{p}_1(P)$, then $Ell(P, \tau)$ is a modular form of weight $2r$ over $SL(2, \Z)$, $PEll_1(Z,P, \tau)$ is a modular form of weight $2r$ over $\Gamma_0(2)$, $PEll_2(Z,P, \tau)$ is a modular form of weight $2r$ over $\Gamma^0(2)$ and $PEll_3(Z,P, \tau)$ is a modular form of weight $2r$ over $\Gamma_\theta(2)$; moreover, we have 
$$Ell_1(P, -1/\tau)=\tau^{2r}Ell_2(P, \tau), \ \ Ell_2(P, \tau+1)=Ell_3(P, \tau).$$
\end{theorem}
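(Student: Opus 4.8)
The plan is to reduce Theorem \ref{promain} entirely to Theorem \ref{modularity} by means of the bookkeeping identity built into the construction of the genera for $P$. The crucial structural fact, established just before the definition of $Ell(P,\tau)$, is that the chosen stabilization $TZ\oplus TZ^{\bot}\cong Z\times\R^m$ together with the projective spin$^c$ bundle $S_c(TZ^{\bot})$ yields an honest isomorphism of projective bundles $\F\otimes S_c(TZ^{\bot})\cong E^{\oplus 2^m}$, where $E$ is a projective vector bundle of rank $l$ carrying twist $H-W_3(Z)$. So my first step is to carry the operations $\Theta,\Theta_i,\wedge^{\mathrm{even}}-\wedge^{\mathrm{odd}}$, and the square-root determinant through this direct-sum identity. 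Since these operations behave multiplicatively (respectively additively in the appropriate graded sense) on direct sums, the Witten-type bundle built from $\F\otimes S_c(TZ^{\bot})$ is the $2^m$-th "power" (tensor power in the $\Theta$-factors, $2^m$-fold graded tensor power in the $\wedge$-factors, $2^m$-th power of the determinant line) of the corresponding bundle built from $E$. Taking graded twisted Chern characters and extracting $2^m$-th roots — which is exactly what the exponents $\tfrac{1}{2^m}$ and $\tfrac{1}{2^{m+1}}$ in the definitions of $\HH,\HH_1,\HH_2,\HH_3$ are designed to compensate — recovers precisely $\sqrt{\mathrm{Ch}_{-lH}(\det\bar E)}$ and $\GCh_H$ of the Witten bundles of $E$. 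Likewise the twist $-2^m l(H-W_3(Z))$ of the determinant of $\F\otimes S_c(TZ^{\bot})$ becomes, after the $2^{m+1}$-th root, the twist $-lH$ carried by $\sqrt{\det\bar E}$, because the $W_3(Z)$ contributions from $S_c(TZ^{\bot})$ and from $\F$ cancel in the tensor product against $TZ^{\bot}$, leaving only the twist $H$ of $E$. Matching prefactors $\bigl(\prod(1-q^j)\bigr)^{4r-2l}$ etc. are literally the same in the two definitions, since $l$ denotes the rank of $E$ in both. This gives part (i): $Ell(P,\tau)=PEll(Z,E,\tau)$ and $Ell_i(P,\tau)=PEll_i(Z,E,\tau)$, and simultaneously the well-definedness asserted in the remark, since $PEll(Z,E,\tau)$ depends only on $E$ and not on auxiliary choices of connection (the integrand's cohomology class is connection-independent by Chern--Weil).

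For part (ii), once (i) is in place the statement is immediate: $\mathfrak p_1(P)$ is \emph{defined} as $\mathfrak p_1(E)$, so the hypothesis $p_1(TZ)=\mathfrak p_1(P)$ is the hypothesis $p_1(TZ)=\mathfrak p_1(E)$ of Theorem \ref{modularity}, and hence $Ell(P,\tau)=PEll(Z,E,\tau)$ is modular of weight $2r$ over $SL(2,\Z)$, $Ell_1(P,\tau)=PEll_1(Z,E,\tau)$ is modular over $\Gamma_0(2)$, and so on for $Ell_2,Ell_3$; the transformation relations $Ell_1(P,-1/\tau)=\tau^{2r}Ell_2(P,\tau)$ and $Ell_2(P,\tau+1)=Ell_3(P,\tau)$ are the corresponding relations of Theorem \ref{modularity}(ii) transported through the equalities in (i). The remark's integrality/rationality claims ($Ell(P,\tau)\in\Q[[q]]$ etc.) follow because $PEll(Z,E,\tau)\in\Q[[q]]$ etc. by definition \ref{P1}--\ref{P4}.

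I expect the main obstacle to be not conceptual but combinatorial: verifying carefully that the $q$-series decompositions $W_{m,n},A_{m,n},B_{m,n},C_{m,n}$ and the $\mathcal L_g^{\otimes m}$-equivariance bookkeeping are compatible with the $2^m$-fold tensor/exterior powers, i.e.\ that extracting the $2^m$-th root of the graded twisted Chern character commutes with the grading by $m$ (the power of the primitive line bundle) and with the $q$-grading. Concretely one must check that the twisted Chern character of a $2^m$-fold graded tensor power decomposes so that $\bigl(\GCh_{2^m H}\bigr)^{1/2^m}=\GCh_H$ at the level of each $(m,n)$-graded piece; this uses multiplicativity of $\mathrm{Ch}_{mH}$ under tensor product of projective bundles whose $\mathcal L$-weights add, together with the fact that all the relevant forms descend to $Z$ (guaranteed by condition (\ref{module}) and its powers). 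A secondary technical point is bookkeeping the two twists $H$ and $W_3(Z)$ through the isomorphism $\F\otimes S_c(TZ^{\bot})\cong E^{\oplus 2^m}$ and confirming that the net twist on $E$ is $H-W_3(Z)$ as claimed, and that after the appropriate root extractions the effective twist on the Chern-character side is exactly $H$ (the $W_3(Z)$ from $S_c(TZ^{\bot})$ being cancelled by that of $TZ^{\bot}$, consistently with the fact that $TZ\oplus TZ^{\bot}$ is trivial and hence untwisted). Once these compatibilities are checked — essentially a careful unwinding of definitions — the theorem follows with no further analysis.
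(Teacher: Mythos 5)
Your proposal is correct and follows essentially the same route as the paper: part (i) is obtained from the multiplicativity of $\det$, $\wedge^{\mathrm{even}}-\wedge^{\mathrm{odd}}$ and the $\Theta$-operations under direct sums, identifying $(\HH(\nabla^\F,\nabla^{S_c(TZ^{\bot})}))^{2^m}$ with the corresponding class for $E^{\oplus 2^m}\cong\F\otimes S_c(TZ^{\bot})$ and then extracting the $2^m$-th root, and part (ii) then follows by combining (i) with Theorem \ref{modularity}. (One minor notational slip: the twist carried by $E$ is $H-W_3(Z)$, so the twisted Chern characters entering $PEll(Z,E,\tau)$ are taken with respect to $H-W_3(Z)$ rather than $H$ as you state, but this does not affect the argument.)
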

\begin{proof} (i) By the multiplicativity of the operations $\det(V), \wedge^{even}E-\wedge^{odd}E$ and $\Theta(E)$, it is not hard to see that the cohomology class
\be
\begin{split}
&\left[\sqrt{\mathrm{Ch}_{-2^ml(H-W_3(Z))}(\det \bar E)}\, \GCh_{H-W_3(Z)}\left((\wedge^{even}E-\wedge^{odd}E)\otimes\Theta(E)\right)\right]^{2^m}\\
=&\sqrt{\mathrm{Ch}_{-2^ml(H-W_3(Z))}(\det \overline{E^{\oplus 2^m}})}\, \GCh_{H-W_3(Z)}\left((\wedge^{even}(E^{\oplus 2^m})-\wedge^{odd}(E^{\oplus 2^m}))\otimes\Theta(E^{\oplus 2^m})\right)\\
=&\sqrt{\mathrm{Ch}_{-2^ml(H-W_3(Z))}(\det \overline{(\F \otimes S_c(TZ^{\bot})})}\\
&\cdot \GCh_{H-W_3(Z)}\left((\wedge^{even}(\F\otimes S_c(TZ^{\bot}))-\wedge^{odd}(\F\otimes S_c(TZ^{\bot})))\otimes\Theta(\F\otimes S_c(TZ^{\bot}))\right)\\
\end{split}
\ee
is represented by the differential form $(\HH(\nabla^\F, \, \nabla^{S_c(TZ^{\bot})} ))^{2^m}$. Then $Ell(P, \tau)=PEll(Z, E, \tau)$ follows from $(\ref{PP1})$ and $(\ref{P1})$. 
One can similarly use the multiplicativity to prove that $Ell_i(P, \tau)=PEll_i(Z,E, \tau), i=1, 2,3.$

Combining (i) and Theorem \ref{modularity}, (ii) is obtained.  
\end{proof}

In the following, we give two examples of explicit computation of the elliptic pseudodifferential genera. 

\begin{example} \label{spinc} Let $Z$ be a spin$^c$ manifold and $P={\not\! \partial}^+_c$, the spin$^c$ Dirac operator. Then the $E$ corresponding to $P$ is just the trivial complex line bundle $\C$. Hence by (i) in Theorem \ref{promain}, we have 
\be Ell({\not\! \partial}^+_c, \tau)=PEll(Z, \CC, \tau)=0\in \Q[[q]],\ee
\be Ell_1({\not\! \partial}^+_c, \tau)=PEll_1(Z, \CC, \tau)=2W(Z)\in \Q[[q]],\ee
\be Ell_2({\not\! \partial}^+_c, \tau)=PEll_2(Z, \CC, \tau)=W(Z)\in \Q[[q^{1/2}]],\ee and 
\be Ell_3({\not\! \partial}^+_c, \tau)=PEll_3(Z, \CC, \tau)=W(Z)\in \Q[[q^{1/2}]],\ee
where $W(Z)$ is the Witten genus of $Z$. This is similar to looking at the $\hat A$-genus on spin$^c$ manifolds in \cite{MMS06}.
\end{example}

\begin{example} On $\C P^2$, take the standard spin$^c$ structure from the complex structure. let $\F=\mathcal{O}(1)\otimes (\wedge^{ev} T \oplus \wedge^{odd} T)$, where $\mathcal{O}(1)$ is the canonical complex line bundle and $T$ stands for the complex tangent bundle.  Let $P: \mathcal{O}(1)\otimes \wedge^{ev} T\to \mathcal{O}(1)\otimes \wedge^{odd} T$ be an elliptic pseudodifferential operator. Here the $E$ corresponding to $P$ happens to be the honest line bundle $\mathcal{O}(1)$ rather than a projective one.  Let $x\in H^2(\C P^2, \Z)$ be the generator and $z=\frac{x}{2\pi \sqrt{-1}}. $ By (i) in Theorem \ref{promain}, we have

\be Ell(P, \tau)=PEll(\C P^2, \mathcal{O}(1), \tau)=\int_{\C P^2} \left(\frac{z\theta'(0, \tau)}{\theta(z, \tau)}\right)^3\frac{\theta(z, \tau)}{\theta'(0, \tau)}=0\ee
due to $\left(\frac{z\theta'(0, \tau)}{\theta(z, \tau)}\right)^3\frac{\theta(z, \tau)}{\theta'(0, \tau)}$ is an odd function of $z$;
\be 
\begin{split}
Ell_1(P, \tau)
=&PEll_1(\C P^2, \mathcal{O}(1), \tau)\\
=&\int_{\C P^2} \left(\frac{z\theta'(0, \tau)}{\theta(z, \tau)}\right)^3\frac{\theta_1(z, \tau)}{\theta_1(0, \tau)}\\
=&-\frac{1}{8\pi^2}\left(\frac{\theta_1''(0, \tau)}{\theta_1(0, \tau)}-\frac{\theta'''(0, \tau)}{\theta'(0, \tau)} \right)\\
=&2q+\cdots;
\end{split}
\ee
\be
\begin{split}
Ell_2(P, \tau)
=&PEll_2(\C P^2, \mathcal{O}(1), \tau)\\
=&\int_{\C P^2} \left(\frac{z\theta'(0, \tau)}{\theta(z, \tau)}\right)^3\frac{\theta_2(z, \tau)}{\theta_2(0, \tau)}\\
=&-\frac{1}{8\pi^2}\left(\frac{\theta_2''(0, \tau)}{\theta_2(0, \tau)}-\frac{\theta'''(0, \tau)}{\theta'(0, \tau)} \right)\\
=&-\frac{1}{8}-q^{1/2}+\cdots \in \Q[[q^{1/2}]]
\end{split}
\ee
and 
\be 
\begin{split}
Ell_3(P, \tau)
=&PEll_3(\C P^2, \mathcal{O}(1), \tau)\\
=&\int_{\C P^2} \left(\frac{z\theta'(0, \tau)}{\theta(z, \tau)}\right)^3\frac{\theta_3(z, \tau)}{\theta_3(0, \tau)}\\
=&-\frac{1}{8\pi^2}\left(\frac{\theta_3''(0, \tau)}{\theta_3(0, \tau)}-\frac{\theta'''(0, \tau)}{\theta'(0, \tau)} \right)\\
=&-\frac{1}{8}+q^{1/2}+\cdots\in \Q[[q^{1/2}]].
\end{split}
\ee
They are all rational $q$-series rather than integral $q$-series. 
\end{example}


\section*{Appendix. Some number theory preparation}

A general reference for this appendix is \cite{Ch85}.

Let $$ SL_2(\mathbb{Z}):= \left\{\left.\left(\begin{array}{cc}
                                      a&b\\
                                      c&d
                                     \end{array}\right)\right|a,b,c,d\in\mathbb{Z},\ ad-bc=1
                                     \right\}
                                     $$
 as usual be the modular group. Let
$$S=\left(\begin{array}{cc}
      0&-1\\
      1&0
\end{array}\right), \ \ \  T=\left(\begin{array}{cc}
      1&1\\
      0&1
\end{array}\right)$$
be the two generators of $ SL_2(\mathbb{Z})$. Their actions on
$\mathbb{H}$ are given by
$$ S:\tau\rightarrow-\frac{1}{\tau}, \ \ \ T:\tau\rightarrow\tau+1.$$

Let
$$ \Gamma_0(2)=\left\{\left.\left(\begin{array}{cc}
a&b\\
c&d
\end{array}\right)\in SL_2(\mathbb{Z})\right|c\equiv0\ \ (\rm mod \ \ 2)\right\},$$

$$ \Gamma^0(2)=\left\{\left.\left(\begin{array}{cc}
a&b\\
c&d
\end{array}\right)\in SL_2(\mathbb{Z})\right|b\equiv0\ \ (\rm mod \ \ 2)\right\}$$

$$ \Gamma_\theta=\left\{\left.\left(\begin{array}{cc}
a&b\\
c&d
\end{array}\right)\in SL_2(\mathbb{Z})\right|\left(\begin{array}{cc}
a&b\\
c&d
\end{array}\right)\equiv\left(\begin{array}{cc}
1&0\\
0&1
\end{array}\right) \mathrm{or} \left(\begin{array}{cc}
0&1\\
1&0
\end{array}\right)\ \ (\rm mod \ \ 2)\right\}$$
be the three modular subgroups of $SL_2(\mathbb{Z})$. It is known
that the generators of $\Gamma_0(2)$ are $T,ST^2ST$, the generators
of $\Gamma^0(2)$ are $STS,T^2STS$  and the generators of
$\Gamma_\theta$ are $S$, $T^2$. (cf. \cite{Ch85}).

The four Jacobi theta-functions (c.f. \cite{Ch85}) defined by
infinite multiplications are

\be \theta(v,\tau)=2q^{1/8}\sin(\pi v)\prod_{j=1}^\infty[(1-q^j)(1-e^{2\pi \sqrt{-1}v}q^j)(1-e^{-2\pi
\sqrt{-1}v}q^j)], \ee
\be \theta_1(v,\tau)=2q^{1/8}\cos(\pi v)\prod_{j=1}^\infty[(1-q^j)(1+e^{2\pi \sqrt{-1}v}q^j)(1+e^{-2\pi
\sqrt{-1}v}q^j)], \ee
 \be \theta_2(v,\tau)=\prod_{j=1}^\infty[(1-q^j)(1-e^{2\pi \sqrt{-1}v}q^{j-1/2})(1-e^{-2\pi
\sqrt{-1}v}q^{j-1/2})], \ee
\be \theta_3(v,\tau)=\prod_{j=1}^\infty[(1-q^j)(1+e^{2\pi
\sqrt{-1}v}q^{j-1/2})(1+e^{-2\pi \sqrt{-1}v}q^{j-1/2})], \ee where
$q=e^{2\pi \sqrt{-1}\tau}, \tau\in \mathbb{H}$.

They are all holomorphic functions for $(v,\tau)\in \mathbb{C \times
H}$, where $\mathbb{C}$ is the complex plane and $\mathbb{H}$ is the
upper half plane.

Let $\theta^{'}(0,\tau)=\frac{\partial}{\partial
v}\theta(v,\tau)|_{v=0}$. The {\em Jacobi identity} \cite{Ch85},
$$\theta^{'}(0,\tau)=\pi \theta_1(0,\tau)
\theta_2(0,\tau)\theta_3(0,\tau)$$ holds.

The theta functions satisfy the the following
transformation laws (cf. \cite{Ch85}), 
\be 
\theta(v,\tau+1)=e^{\pi \sqrt{-1}\over 4}\theta(v,\tau),\ \ \
\theta\left(v,-{1}/{\tau}\right)={1\over\sqrt{-1}}\left({\tau\over
\sqrt{-1}}\right)^{1/2} e^{\pi\sqrt{-1}\tau v^2}\theta\left(\tau
v,\tau\right)\ ;\ee 
\be \theta_1(v,\tau+1)=e^{\pi \sqrt{-1}\over
4}\theta_1(v,\tau),\ \ \
\theta_1\left(v,-{1}/{\tau}\right)=\left({\tau\over
\sqrt{-1}}\right)^{1/2} e^{\pi\sqrt{-1}\tau v^2}\theta_2(\tau
v,\tau)\ ;\ee 
\be\theta_2(v,\tau+1)=\theta_3(v,\tau),\ \ \
\theta_2\left(v,-{1}/{\tau}\right)=\left({\tau\over
\sqrt{-1}}\right)^{1/2} e^{\pi\sqrt{-1}\tau v^2}\theta_1(\tau
v,\tau)\ ;\ee 
\be\theta_3(v,\tau+1)=\theta_2(v,\tau),\ \ \
\theta_3\left(v,-{1}/{\tau}\right)=\left({\tau\over
\sqrt{-1}}\right)^{1/2} e^{\pi\sqrt{-1}\tau v^2}\theta_3(\tau
v,\tau)\ .\ee

Let $\Gamma$ be a subgroup of $SL_2(\mathbb{Z}).$ A modular form over $\Gamma$ is a holomorphic function $f(\tau)$ on $\mathbb{H}\cup
\{\infty\}$ such that for any
 $$ g=\left(\begin{array}{cc}
             a&b\\
             c&d
             \end{array}\right)\in\Gamma\ ,$$
 the following property holds
 $$f(g\tau):=f\left(\frac{a\tau+b}{c\tau+d}\right)=\chi(g)(c\tau+d)^kf(\tau), $$
 where $\chi:\Gamma\rightarrow\mathbf{C}^*$ is a character of
 $\Gamma$ and $k$ is called the weight of $f$.


\end{document}